\newtheorem{theorem}{Theorem}[section]
\newtheorem{problem}[theorem]{Problem}
\newtheorem{definition}[theorem]{Definition}
\newtheorem{example}[theorem]{Example}
\newtheorem{lemma}[theorem]{Lemma}
\newtheorem{remark}[theorem]{Remark}
\renewenvironment{proof}[1][Proof]{\noindent\textbf{#1.} }{\ \rule{0.5em}{0.5em}}
\begin{document}
\title[Symmetric space, strongly isotropy irreducibility and equigeodesic properties]{Symmetric space, strongly isotropy irreducibility and equigeodesic properties}
\author{Ming Xu}
\address[Ming Xu]{
School of Mathematical Sciences,
Capital Normal University,
Beijing 100048,
P. R. China}
\email{mgmgmgxu@163.com}

\author{Ju Tan$^*$}
\address[Ju Tan]{School of Mathematics and Physics,
Anhui University of Technology, Maanshan, 243032, P.R. China}\email{tanju2007@163.com}

\date{}

\thanks{$^*$ Ju Tan is the corresponding author.}

\begin{abstract}
A smooth curve on a homogeneous manifold $G/H$ is called a Riemannian equigeo-desic if it is a homogeneous geodesic for any $G$-invariant Riemannian metric. The homogeneous manifold $G/H$ is called Riemannian equigeodesic, if for any $x\in G/H$ and any nonzero $y\in T_x(G/H)$, there exists a Riemannian equigeodesic $c(t)$ with $c(0)=x$ and $\dot{c}(0)=y$. These two notions can be naturally transferred to the Finsler setting, which provides the definitions for Finsler equigeodesic and Finsler equigeodesic space. We prove two classification theorems for Riemannian equigeodesic spaces and Finsler equigeodesic spaces respectively. Firstly, a homogeneous manifold $G/H$ with connected simply connected quasi compact $G$ and connected $H$ is Riemannian equigeodesic if and only if it can be decomposed as a product of Euclidean factors and compact strongly isotropy irreducible factors. Secondly, a homogeneous manifold $G/H$ with a compact semi simple $G$ is Finsler equigeodesic if and only if
it can be locally decomposed as a product, in which each factor is $Spin(7)/G_2$, $G_2/SU(3)$ or a symmetric space of compact type. These results imply that symmetric space and strongly isotropy irreducible space of compact type can be interpreted by equigeodesic properties. As an application, we classify the homogeneous manifold $G/H$ with a compact semi simple $G$, such that all $G$-invariant Finsler metrics on $G/H$ are Berwald. It suggests a new project in homogeneous Finsler geometry, to systematically study the homogeneous manifold $G/H$ on which all $G$-invariant Finsler metrics satisfy certain geometric property.

\textbf{Mathematics Subject Classification (2010)}:
53C22, 53C30, 53C60

\textbf{Key words}: equigeodesic, equigeodesic space, flag manifold, orbit type stratification, symmetric space, strongly isotropy irreducible space
\end{abstract}

\maketitle

\section{Introduction}

A Riemannian manifold is called a {\it geodesic orbit} (or simply {\it g.o.}) space, if each maximally extended geodesic is homogeneous, i.e.,
it is the orbit of a one-parameter subgroup of isometries. This notion was introduced by
O. Kowalski and L. Vanhecke in 1991 \cite{KV1991}. Since then, it has been extensively studied in homogeneous Riemannian geometry and homogeneous pseudo-Riemannian geometry \cite{AA2007,AN2009,CWZ2022,DKN2004,Go1996,GN2018,Ni2013,NW2022} (more references can be found in \cite{BN2020}).
Recently, S. Deng and Z. Yan defined the g.o. property in homogeneous Finsler geometry \cite{YD2014}. See \cite{Xu2018,Xu2021,ZX2022} for some recent progress.

In this paper we are motivated by the following question:
\medskip

\noindent
{\bf Question}\quad
{\it Can we define an analog of the g.o. property with homogeneous geodesic replaced by equigeodesic?}
\medskip

Here an {\it equigeodesic} is a smooth curve on
a homogeneous manifold $G/H$, which is a homogeneous geodesic for any $G$-invariant Finsler metric, or any $G$-invariant Finsler metric in a preferred subclass, Riemannian, Randers, $(\alpha,\beta)$, etc. As we have different types of equigeodesics, we specify them as {\it Finsler equigeodesic}, {\it Riemannian equigeodesic}, {Rander equigeodesic}, {\it $(\alpha,\beta)$ equigeodesic}, etc.  Equigeodesic was firstly introduced by L. Cohen, L. Grama and C.J.C. Negreiros in 2010 \cite{CGN2010}. Until now, only Riemannian equigeodesic has been studied on some special homogeneous manifolds \cite{GN2011}\cite{St2020}\cite{WZ2013}. In this paper, we concern only Riemannian equigeodesic and Finsler equigeodesic. See \cite{TX2022} for some progress on other types of equigeodesics
.

Using the pattern of the g.o. property, we define {\it Riemannian} and {\it Finsler equigeodesic space} as following. A homogeneous manifold $G/H$ is called {\it Riemannian} or {\it Finsler equigeodesic}, if for any $x\in G/H$ and any nonzero $y\in T_x(G/H)$, there exists a equigeodesic $c(t)$ of the specified type satisfying $c(0)=x$ and $\dot{c}(0)=y$. When $G/H$ has an orthogonal reductive decomposition $\mathfrak{g}=\mathfrak{h}+\mathfrak{m}$ with respect to
a fixed $\mathrm{Ad}(G)$-invariant inner product $\langle\cdot,\cdot\rangle_{\mathrm{bi}}$ on $\mathfrak{g}$,
$G/H$ is Riemannian or Finsler equigeodesic if and only if each
$u\in\mathfrak{m}\backslash\{0\}$ is a Riemannian or Finsler equigeodesic vector, respectively.

Compared to the mixed nature of homogeneous geodesic and g.o. property, which is half algebraic and half geometric, equigeodesic and equigeodesic spaces are totally algebraic properties, and they are much stronger. So it looks more likely that equigeodesic and equigeodesic spaces can be explicitly described or completely classified, without two much calculation. This thought is justified by the main theorems of
this paper, which partially classified Riemannian and Finsler equigeodesic spaces.

For Riemannian equigeodesic space, we have
\medskip

{\bf Theorem A}\quad
{\it Let $G/H$ be a simply connected homogeneous manifold on which the connected simply connected quasi compact Lie group $G$ acts almost effectively. Then $G/H$ is a Riemannian equigeodesic if and only if it is a product of Euclidean factors and strongly isotropy irreducible factors.}
\medskip

Here a product decomposition $G/H=G_1/H_1\times \cdots\times G_m/H_m$ for a homogeneous manifold means
that $G=G_1\times\cdots\times G_m$, $H=H_1\times\cdots
H_n$ with $H_i=H\cap G_i$ for each $i$.

Theorem A is a reformulation of Theorem \ref{main-thm-1}. Firstly, it provides a new description for compact strongly isotropy irreducible spaces and their products (see \cite{Be1995} or Theorem 27 in \cite{BG2014}).
Secondly, it reduces the classification for some connected simply connected Riemannian equigeodesic spaces to that for compact strongly isotropy irreducible spaces \cite{Kr1975,Ma1961-1,Ma1961-2,Ma1966,Wo1968}. Finally, it implies that the Riemannian equigeodesic space property for $G/H$ depends not only on Lie algebras but also on Lie groups (see Remark \ref{remark}), so
our knowledge on Riemannian equigeodesic space which is not connected or not simply connected
is still quite limited.

For Finsler equigeodesic space, we have
\medskip

\noindent
{\bf Theorem B}\quad
{\it Let $G/H$ be a homogeneous manifold on which the compact semi simple Lie group $G$ acts almost effectively. Then $G/H$ is Finsler equigeodesic if and only if it can be locally decomposed as $G/H=G_1/H_1\times\cdots\times G_m/H_m$, in which each  $G_i/H_i$ is $Spin(7)/G_2$, $G_2/SU(3)$ or a symmetric space of compact type.}
\medskip

Here the local product decomposition for a homogeneous manifold $G/H$ means a product decomposition for the universal cover for a connected component $G_0/G_0\cap H$ of $G/H$,
or equivalently the following direct sum decompositions in the Lie algebra level,
\begin{eqnarray*}
\mathfrak{g}=
\mathfrak{g}_1\oplus\cdots\oplus\mathfrak{g}_{m},\quad
\mathfrak{h}=
\mathfrak{h}_1\oplus\cdots\oplus\mathfrak{h}_m
=(\mathfrak{h}\cap\mathfrak{g}_1)\oplus\cdots
(\mathfrak{h}\cap\mathfrak{g}_m).
\end{eqnarray*}

Theorem B reduces the classification for Finsler equigeodesic spaces of compact type to that for symmetric spaces \cite{He1962}. Compare to Theorem A, the proof of Theorem B is harder, but the classification result is much cleaner and more complete.

The strategy for proving Theorem B is the following. Firstly we prove two criterions for Finsler equigeodesic vector and Finsler equigeodesic space respectively (see Theorem \ref{main-thm-2} and Lemma \ref{lemma-3}). It turns out that Finsler equigeodesic space is a property which only depends on the Lie algebras. So, secondly, we can use Theorem A (notice that a Finsler equigeodesic space must be Riemannian equigeodesic) to locally decompose a Finsler equigeodesic space $G/H$ to a product of compact strongly isotropy irreducible factors. By Lemma \ref{lemma-4}, each factor is also Finsler equigeodesic. Then we need to apply the criterion to each $G/H$ in the classification list for compact non-symmetric strongly isotropy irreducible spaces. This would be a terribly long journey. Fortunately we find the short cut. Roughly speaking, if $G/H$ is Finsler equigeodesic, then $\dim\mathfrak{h}/\dim\mathfrak{g}$ can not be too small, and for the
orthogonal reductive decomposition $\mathfrak{g}=\mathfrak{h}+\mathfrak{m}$ with respect to
an $\mathrm{Ad}(G)$-invariant inner product $\langle\cdot,\cdot\rangle_{\mathrm{bi}}$ on $\mathfrak{g}$, each vector in $\mathfrak{m}$ has a relatively large centralizer in $\mathfrak{g}$ (see (1) and (3) in Lemma \ref{lemma-6}).
It is very easy to check that these numerical properties can not be simultaneously satisfied by most compact strongly isotropy irreducible $G/H$ (see Theorem \ref{main-thm-4}).

Theorem A and Theorem B provide new interpretations for strongly isotropy irreducibility and symmetric space. Though in this paper we have only studied equigeodesic space from the compact side, we guess those non-compact ones are also interesting and may share some similar phenomena.

Moreover, Theorem B starts a new project in homogeneous Finsler geometry, i.e., to classify the homogeneous manifold $G/H$ on which each $G$-invariant Finsler metric satisfies curtain geometric property.
For example, we may consider
\begin{problem}\label{problem-1}
Classify all homogeneous manifolds $G/H$ such that all $G$-invariant Finsler metrics on $G/H$ are
Berwald.
\end{problem}
We prove that
a homogeneous manifold $G/H$ with a compact $G$ is Finsler equigeodesic if and only if
it has a reductive decomposition for which each $G$-invariant Finsler metric is naturally reductive (see Lemma \ref{proposition-1}), and if and only if the property in Problem \ref{problem-1} is satisfied
(see Theorem \ref{thm-1}), so Theorem B has the following application, which helps us solve Problem \ref{problem-1} partially.\medskip

\noindent
{\bf Theorem C}\quad
{\it Let $G/H$ be a homogeneous manifold on which the compact semi simple Lie group $G$ acts almost effectively. Then it satisfies the condition that each $G$-invariant Finsler metric on $G/H$ is Berwald
if and only if  it can be locally decomposed as $G/H=G_1/H_1\times\cdots\times G_m/H_m$, in which each  $G_i/H_i$ is $Spin(7)/G_2$, $G_2/SU(3)$ or a symmetric space of compact type.}
\medskip

It seems promising that Theorem C might be generalized for noncompact $G$.

We may naturally generalize Question 5.12.25 in \cite{BN2020} and consider
\begin{problem}\label{problem-2}
Classify all homogeneous manifolds $G/H$ such that all $G$-invariant Finsler metrics on $G/H$ are
geodesic orbit (or have vanishing S-curvature with respect to a $G$-invariant measure).
\end{problem}
It seems that Problem \ref{problem-2} is a much harder problem.

This paper is organized as following. In Section 2, we summarize some basic knowledge on general and homogeneous Finsler geometry, and on flag manifold, which are necessary for our later discussion. In Section 3, we introduce Riemannian equigeodesic space and prove Theorem A (i.e., Theorem \ref{main-thm-1}). In Section 4, we introduce Finsler equigeodesic and Finsler equigeodesic space, and partially prove Theorem B. In Section 5, we discuss compact strongly
isotropy irreducible Finsler equigeodesic space, and finish
the proof of Theorem B. In Section 6, we prove Theorem C.

\section{Preliminaries}

\subsection{Minkowski norm, Finsler metric and geodesic}
A {\it Minkowski norm} on a real vector space $\mathbf{V}$, $\dim\mathbf{V}=n$, is a continuous function $F:\mathbf{V}\rightarrow \mathbb{R}_{\geq0}$ satisfying
\begin{enumerate}
\item the positiveness and smoothness, i.e., $F|_\mathbf{V}\backslash0$ is a positive smooth function;
\item the positive 1-homogeneity, i.e., $F(\lambda y)=\lambda F(y)$ for every $\lambda\geq0$;
\item the convexity, i.e., for any $y\in\mathbf{V}\backslash\{0\}$,
$\langle u,v\rangle^F_y=\tfrac12 \tfrac{\partial^2}{\partial s\partial t}|_{s=t=0}F^2(y+su+tv)$ defines an inner product on $\mathfrak{m}$.
\end{enumerate}

A {\it Finsler metric} on a smooth manifold $M$ is a continuous function $F:TM\rightarrow \mathbb{R}_{\geq0}$ such that
$F|_{TM\backslash0}$ is smooth, and for each $x\in M$,
$F(x,\cdot)$ is a Minkowski norm. We also call
$(M,F)$ a {\it Finsler manifold} or a {\it Finsler space}
\cite{BCS2000}.

If each $F(x,\cdot)$ is an Euclidean norm, i.e., $F(x,y)=\langle y,y\rangle^{1/2}$ for some inner product $\langle\cdot,\cdot\rangle$ on $T_xM$, we say $F$ is a {\it Riemannian metric}. A Minkowski norm (or a Finsler metric) $F$ is Euclidean (or Riemannian respectively) if its Cartan tensor $$C^F_y(u,v,w)=\tfrac14\tfrac{\partial^3}{\partial r\partial s\partial t}|_{r=s=t=0}F^2(y+ru+sv+tw)$$ vanishes everywhere.

Geodesic on a Finsler manifold $(M,F)$ can be similarly defined as in Riemannian geometry, which is a smooth curve satisfying the locally minimizing principle for the arch length functional. Practically, we always assume that a geodesic has positive constant speed, i.e., $F(c(t),\dot{c}(t))\equiv\mathrm{const}>0$. Then a smooth curve $c(t)$ is a geodesic if and only if its lifting $(c(t),\dot{c}(t))$ in $TM\backslash0$ is an integral curve of the geodesic spray $$\mathrm{G}=y^i\partial_{x^i}-2\mathrm{G}^i\partial_{y^i},
\mbox{ in which } \mathrm{G}^i=\tfrac14g^{il}([F^2]_{x^ky^l}y^k-[F^2]_{x^l}).$$
Locally a geodesic $c(t)$ is a solution of ODE system
$$\ddot{c}^i(t)+2\mathrm{G}(c(t),\dot{c}(t))=0,\quad\forall i.$$

See \cite{BCS2000,Sh2001} for more details.

\subsection{Homogeneous Finsler space and invariant Finsler metric}

A Finsler manifold $(M,F)$ is {\it homogeneous} if
its isometry group $I(M,F)$ acts transitively on $M$. Since $I(M,F)$ is a Lie transformation group, we can present $M$ as
a homogeneous manifold $M=G/H$, for any Lie subgroup $G\subset I(M,F)$ which acts transitively on $M$, and the homogeneous metric $F$ is also called a {\it $G$-invariant metric} \cite{De2012}. In this definition $G$ must act  effectively on $(G/H,F)$. However, in most occasions, the {\it almost effectiveness}, i.e., $\mathfrak{h}$ does not contain a nonzero ideal of $\mathfrak{g}$, is enough. So we choose another way to introduce homogeneous Finsler geometry and its basic algebraic setups.

In this paper, we use $G/H$ to denote a homogeneous manifold, denote $\mathfrak{g}=\mathrm{Lie}(G)$, $\mathfrak{h}=\mathrm{Lie}(H)$, and always assume the following:
\begin{enumerate}
\item $G$ acts almost effectively on $G/H$;
\item$H$ is compactly imbedded in $G$, i.e., $\mathrm{Ad}_\mathfrak{g}(H)$ has a compact closure in $\mathrm{Aut}\mathfrak{g}$.
\end{enumerate}

The assumption (2) guarantees a {\it reductive decomposition} for $G/H$, i.e., an $\mathrm{Ad}(H)$-invariant decomposition
$
\mathfrak{g}=\mathfrak{h}+\mathfrak{m}
$. With respect to the given reductive decomposition, we denote $\mathrm{pr}_\mathfrak{h}:\mathfrak{g}
\rightarrow\mathfrak{h}$ and $\mathrm{pr}_\mathfrak{m}:\mathfrak{g}
\rightarrow\mathfrak{m}$
the corresponding linear projections, and denote $u_\mathfrak{h}=\mathrm{pr}_\mathfrak{h}(u)$,
$u_\mathfrak{m}=\mathrm{pr}_\mathfrak{m}(u)$ for any $u\in\mathfrak{g}$.
The subspace $\mathfrak{m}$ can be naturally viewed as the tangent space $T_o (G/H)$ at the origin $o=eH$ with the
$\mathrm{Ad}(H)$-action on $\mathfrak{m}$ identified with the isotropy $H$-action on $T_o(G/H)$.

The assumption (2) also guarantees the existences of $G$-invariant Riemannian and Finsler metrics on $G/H$.
By the homogeneity, a $G$-invariant Riemannian metric on $G/H$ is one-to-one determined by an $\mathrm{Ad}(H)$-invariant inner product $\langle\cdot,\cdot\rangle$ on $\mathfrak{m}=T_e(G/H)$.
Similarly, a $G$-invariant Finsler metric $F$ on $G/H$ can be uniquely determined by $F(o,\cdot)$, which can be any arbitrary  $\mathrm{Ad}(H)$-invariant Minkowski norm on $\mathfrak{m}$.
For simplicity, we still use the same $F$ to denote it.
Recall that the Hessian of $\tfrac12F^2$ provides a family of inner products on $\mathfrak{m}$, i.e., $\langle\cdot,\cdot\rangle^F_y$, $y\in\mathfrak{m}\backslash0$.

In homogeneous Finsler geometry, the following fundamental result is well known.

\begin{lemma} \label{lemma-0}
For a homogeneous Finsler manifold $(G/H,F)$
with a reductive decomposition $\mathfrak{g}=\mathfrak{h}+\mathfrak{m}$, we have
\begin{eqnarray*}
\langle [v,w_1],w_2\rangle_y^F+\langle w_1,[v,w_2]\rangle_y^F+2C^F_y(w_1,w_2,[v,y])=0,\quad
\forall v\in\mathfrak{h},w_1,w_2\in\mathfrak{m}, y\in
\mathfrak{m}\backslash\{0\}.
\end{eqnarray*}
\end{lemma}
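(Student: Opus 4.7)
The identity is the infinitesimal form of the $\mathrm{Ad}(H)$-invariance of the family of Hessian inner products $\{\langle\cdot,\cdot\rangle^F_y\}_{y\in\mathfrak{m}\setminus\{0\}}$. The plan is to first upgrade the $\mathrm{Ad}(H)$-invariance of the Minkowski norm $F$ on $\mathfrak{m}$ to an invariance of its Hessian, and then differentiate along a one-parameter subgroup of $H$, identifying the three resulting terms with the three summands in the claim.

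Since the reductive decomposition is $\mathrm{Ad}(H)$-invariant and $F|_{\mathfrak{m}}$ comes from a $G$-invariant Finsler metric, we have $F(\mathrm{Ad}(h)z)=F(z)$ for all $h\in H$ and $z\in\mathfrak{m}$. Because $\mathrm{Ad}(h)$ acts linearly on $\mathfrak{m}$, applying $\tfrac{\partial^2}{\partial s\partial r}|_{s=r=0}$ to $F^2(\mathrm{Ad}(h)(y+sw_1+rw_2))=F^2(y+sw_1+rw_2)$ and using the polarization
$$\langle u_1,u_2\rangle^F_y=\tfrac12\tfrac{\partial^2}{\partial s\partial r}\Big|_{s=r=0}F^2(y+su_1+ru_2)$$
promotes the scalar invariance to the Hessian invariance
$$\langle \mathrm{Ad}(h)w_1,\mathrm{Ad}(h)w_2\rangle^F_{\mathrm{Ad}(h)y}=\langle w_1,w_2\rangle^F_y, \qquad h\in H,\ w_1,w_2\in\mathfrak{m},\ y\in\mathfrak{m}\setminus\{0\}.$$

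Next I would take $h=\exp(tv)$ with $v\in\mathfrak{h}$ and differentiate this identity at $t=0$. Using $\tfrac{d}{dt}|_{t=0}\mathrm{Ad}(\exp tv)z=[v,z]$ and the inclusion $[\mathfrak{h},\mathfrak{m}]\subset\mathfrak{m}$ guaranteed by the reductive condition, the chain rule produces exactly three contributions: differentiating the first argument yields $\langle[v,w_1],w_2\rangle^F_y$, differentiating the second argument yields $\langle w_1,[v,w_2]\rangle^F_y$, and differentiating the base point yields $\tfrac{d}{dt}|_{t=0}\langle w_1,w_2\rangle^F_{\mathrm{Ad}(\exp tv)y}$ along the velocity $[v,y]\in\mathfrak{m}$.

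Finally I would identify the base-point term with the Cartan tensor. Combining the definition of $C^F_y$ with the polarization formula for $\langle\cdot,\cdot\rangle^F_\cdot$, and using the symmetry of $C^F_y$ in its three slots (which follows from equality of mixed partial derivatives of $F^2$), one obtains
$$\tfrac{d}{dt}\Big|_{t=0}\langle w_1,w_2\rangle^F_{y+tu}=2\,C^F_y(w_1,w_2,u).$$
Setting $u=[v,y]$ converts the third contribution into $2C^F_y(w_1,w_2,[v,y])$, and summing the three contributions gives the claimed identity. The argument is mechanical once the Hessian invariance is established; the only point requiring care is the equivalence of the two expressions for the Cartan tensor above, but this is immediate from differentiating the polarization formula and does not represent a real obstacle.
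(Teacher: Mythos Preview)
Your proof is correct and is the standard derivation of this identity. The paper, however, does not supply its own proof: it introduces Lemma~\ref{lemma-0} as a ``well known'' fundamental result and uses it without further justification. So there is no alternative argument in the paper to compare against; your argument simply fills in what the authors chose to omit.
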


In later discussion, we sometimes replace the assumption (2) with the following even stronger assumption: $\mathrm{Ad}_\mathfrak{g}(G)$ is a compact.
Then we can find, and then fix
an $\mathrm{Ad}(G)$-invariant inner product $\langle\cdot,\cdot\rangle_{\mathrm{bi}}$ on $\mathfrak{g}$.
For simplicity, we also call the pair  $(G,\langle\cdot,\cdot\rangle_{\mathrm{bi}})$ a {\it quasi compact} Lie group. In this situation, the reductive decomposition $\mathfrak{g}=\mathfrak{h}+\mathfrak{m}$ is chosen to be the $\langle\cdot,\cdot\rangle_{\mathrm{bi}}$-orthogonal one, which is simply called
an {\it orthogonal reductive decomposition}, and
any $G$-invariant Riemannian metric, determined by the inner product $\langle,\rangle$ on $\mathfrak{m}$, one-to-one determines the
metric operator $\Lambda:\mathfrak{m}\rightarrow\mathfrak{m}$ by
$$\langle u,v\rangle=\langle u,\Lambda(v)\rangle_{\mathrm{bi}},\quad\forall u,v\in\mathfrak{m},$$
 which exhausts all $\mathrm{Ad}(H)$-invariant $\langle\cdot,\cdot\rangle_{\mathrm{bi}}$-positive definite
linear endomorphisms on $\mathfrak{m}$.

\subsection{Homogeneous geodesic}

A geodesic $c(t)$ on a Finsler manifold $(M,F)$ is called {\it homogeneous}, if it is the orbit of a one-parameter subgroup of isometries \cite{YD2014}. For a homogeneous Finsler space $(G/H,F)$, as the isometry subgroup $G$ has been specified,
a homogeneous geodesic is then required to have the form $c(t)=\exp tu\cdot x$ for some $u\in\mathfrak{g}$ and $x\in M$. In particular, when $c(t)=\exp tu\cdot o$ is
a homogeneous geodesic, we call the vector $u\in\mathfrak{g}$
a {\it geodesic vector} for $(G/H,F)$. The follow lemma in \cite{La2007} is a well known criterion for geodesic vector.

\begin{lemma} \label{lemma-1}
For a homogeneous Finsler space $(G/H,F)$
with a reductive decomposition $\mathfrak{g}=\mathfrak{h}+\mathfrak{m}$,  $u\in\mathfrak{g}$ is a geodesic vector if and only if  $u\notin\mathfrak{h}$ and it satisfies
\begin{equation}\label{001}
\langle u_\mathfrak{m},
[\mathfrak{m},u]_\mathfrak{m}\rangle^F_{u_\mathfrak{m}}
=0.
\end{equation}
\end{lemma}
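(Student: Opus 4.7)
The plan is to characterise when $c(t)=\exp(tu)\cdot o$ is a geodesic by computing the first variation of the energy functional along a sufficiently rich family of endpoint-preserving variations and using $G$-invariance to reduce everything to an algebraic identity on $\mathfrak{m}$.

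First, since each $\exp(tu)$ acts on $(G/H,F)$ by isometries, the speed $F(\dot c(t))$ is constant and equal to $F(u_\mathfrak{m})$. Thus $c$ is a non-constant smooth curve exactly when $u_\mathfrak{m}\neq 0$, i.e.\ when $u\notin\mathfrak{h}$, which is plainly necessary for $c$ to be a geodesic. Assuming $u\notin\mathfrak{h}$, I would test $c|_{[0,T]}$ against the family of endpoint-preserving variations
$$c_s(t)=\exp(tu)\exp(sX(t))\cdot o,\qquad X\in C^\infty([0,T],\mathfrak{m}),\ X(0)=X(T)=0.$$
These generate every smooth section of $c^*T(G/H)$ that vanishes at the endpoints, because the isometries $\exp(tu)$ trivialise $c^*T(G/H)$ via the identification $T_o(G/H)=\mathfrak{m}$. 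Setting $g_s(t)=\exp(tu)\exp(sX(t))$ and using $\exp(-tu)\,u\,\exp(tu)=u$, a direct expansion yields
$$g_s(t)^{-1}\dot g_s(t)=u+s\bigl(\dot X(t)-[X(t),u]\bigr)+O(s^2),$$
and $G$-invariance of $F$ then gives $F(\dot c_s(t))=F\bigl((g_s(t)^{-1}\dot g_s(t))_\mathfrak{m}\bigr)$.

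Differentiating $E(c_s)=\tfrac12\int_0^T F^2(\dot c_s(t))\,dt$ in $s$ at $s=0$, using the identity $\tfrac{d}{d\epsilon}|_{\epsilon=0}F^2(y+\epsilon v)=2\langle y,v\rangle^F_y$ (which follows from the definition of $\langle\cdot,\cdot\rangle^F_y$ and the $2$-homogeneity of $F^2$), I obtain
$$\tfrac{d}{ds}\Big|_{s=0}E(c_s)=\int_0^T\langle u_\mathfrak{m},\dot X(t)-[X(t),u]_\mathfrak{m}\rangle^F_{u_\mathfrak{m}}\,dt.$$
Because $\langle u_\mathfrak{m},\cdot\rangle^F_{u_\mathfrak{m}}$ is a fixed linear functional on $\mathfrak{m}$ and $X$ vanishes at the endpoints, integration by parts kills the $\dot X(t)$ term, leaving $-\int_0^T\langle u_\mathfrak{m},[X(t),u]_\mathfrak{m}\rangle^F_{u_\mathfrak{m}}\,dt$. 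This vanishes for every admissible $X$ precisely when $\langle u_\mathfrak{m},[\mathfrak{m},u]_\mathfrak{m}\rangle^F_{u_\mathfrak{m}}=0$, which is the criterion \eqref{001}.

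Since the variations above exhaust all endpoint-preserving variations of $c|_{[0,T]}$ and the first variation formula is linear in the variational field, the criterion \eqref{001} is both necessary and sufficient for $c$ to be a constant-speed critical point of the energy, hence a geodesic. The main technical obstacle is the bookkeeping in the expansion of $g_s(t)^{-1}\dot g_s(t)$ (signs, left- versus right-invariant conventions, and the way $G$-invariance of $F$ intertwines with the projection onto $\mathfrak{m}$); once these are pinned down the argument reduces to a short Maurer--Cartan computation followed by integration by parts, and is the Finslerian counterpart of the standard variational derivation of the homogeneous-geodesic criterion.
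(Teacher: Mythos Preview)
Your argument is correct. The paper does not actually prove this lemma; it quotes it as a known result from Latifi \cite{La2007}, so there is no ``paper's proof'' to compare against. Your variational derivation---trivialising $c^*T(G/H)$ by left translations, expanding the Maurer--Cartan form of $g_s(t)=\exp(tu)\exp(sX(t))$ to first order in $s$, and integrating by parts---is precisely the standard approach used in the literature (both in Latifi's Finsler version and in the earlier Riemannian treatment of Kowalski--Vanhecke). The computations you outline are all correct: the identity $\mathrm{Ad}(\exp(-sX))u=u-s[X,u]+O(s^2)$ gives the first-order term you state, the $G$-invariance reduction $F(\dot c_s)=F\bigl((g_s^{-1}\dot g_s)_{\mathfrak m}\bigr)$ follows from the equivariance of $\pi:G\to G/H$, and the Euler identity for the $2$-homogeneous $F^2$ gives $\tfrac{d}{d\epsilon}\big|_0 F^2(y+\epsilon v)=2\langle y,v\rangle^F_y$.

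One small point worth making explicit: to conclude sufficiency you are implicitly using that the first variation of energy in Finsler geometry depends only on the variation field along $c$ (so that your particular family $c_s$ detects criticality). This is true---the first variation formula has the form $[\,\langle\dot c,V\rangle^F_{\dot c}\,]_0^T-\int_0^T\langle D_{\dot c}\dot c,V\rangle^F_{\dot c}\,dt$ for a suitable covariant derivative---and once stated, your claim that the $(L_{\exp(tu)})_*X(t)$ exhaust all endpoint-vanishing sections finishes the argument.
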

When $F$ is a $G$-invariant Riemannian metric, determined by the $\mathrm{Ad}(H)$-invariant inner product $\langle\cdot,\cdot\rangle$ on $\mathfrak{m}$, this criterion for geodesic vector is still valid \cite{KV1991}, and we may simplify (\ref{001}) as
\begin{equation*}
\langle u_\mathfrak{m},[\mathfrak{m},u]_\mathfrak{m}
\rangle=0.
\end{equation*}

\subsection{Homogeneous Berwald space and naturally reductive Finsler space}
\label{section-2-4}
A Finsler manifold $(M,F)$ is called {\it Berwald} if its geodesic spray $\mathrm{G}=y^i\partial_{x^i}-2\mathrm{G}^i\partial_{y^i}$ is {\it affine}, i.e., all the coefficients
$\mathrm{G}^i=\mathrm{G}^i(x,y)$ are quadratic for its $y$-entry \cite{Sh2001}.

For a homogeneous Finsler manifold $(G/H,F)$ with a reductive decomposition $\mathfrak{g}=\mathfrak{h}+\mathfrak{m}$, its Berwald property can be described using
the {\it spray vector field} $\eta$ introduced by L. Huang \cite{Hu2015-1,Hu2015-2}, i.e.
a smooth map $\eta:\mathfrak{m}\backslash\{0\}\rightarrow\mathfrak{m}$ satisfying
$$\langle\eta(y),u\rangle_y=\langle y,[u,y]_\mathfrak{m}
\rangle_y,\quad\forall u\in\mathfrak{m}.$$

\begin{lemma}\label{lemma-9}
A homogeneous Finsler manifold $(G/H,F)$ with a reductive decomposition $\mathfrak{g}=\mathfrak{h}+\mathfrak{m}$ is Berwald if and only if its spray vector field
$\eta:\mathfrak{m}\backslash\{0\}\rightarrow\mathfrak{m}$ is a quadratic map.
\end{lemma}

\begin{proof} It has been pointed out in Section 5 of \cite{Xu2022} that the geodesic spray $\mathrm{G}$ of $(G/H,F)$ can be decomposed as $\mathrm{G}=\mathrm{G}_0-\mathrm{H}$, where
$\mathrm{G}_0$ is the spray structure for the Nomizu connection on $G/H$, with respect to the given reductive decomposition, and $\mathrm{H}$ is a $G$-invariant vector field on $T(G/H)\backslash0$
which is tangent to each $T_x(G/H)$ and
$\mathrm{H}|_{T_o(G/H)\backslash\{0\}=\mathfrak{m}\backslash\{0\}}=\eta$. Since the Nomizu connection is a linear connection on $G/H$ \cite{KN1969}, its corresponding spray structure $\mathrm{G}_0$ is affine. So $\mathrm{G}$ is affine if and only if $\mathrm{H}$ is quadratic when restricted to each slit tangent space $T_x(G/H)\backslash\{0\}$, and by the $G$-invariancy of $\mathrm{H}$, if and only if $\eta=\mathrm{H}|_{T_o(G/H)\backslash\{0\}}$ is quadratic.
\end{proof}

Naturally reductive Finsler manifolds are a special class of homogeneous Berwald metrics \cite{La2007,DH2010}. A homogeneous Finsler manifold $(G/H,F)$ is called {\it naturally reductive} with respect to a given
reductive decomposition $\mathfrak{g}=\mathfrak{h}+\mathfrak{m}$ if each curve $c(t)=\exp tu \cdot o$ with $u\in\mathfrak{m}\backslash\{0\}$ is a geodesic, or equivalently it has a vanishing
spray vector field $\eta:\mathfrak{m}\backslash\{0\}\rightarrow\mathfrak{m}$ \cite{DH2010}.
\subsection{Classification for flag manifolds}
\label{section-2-5}
Let $G$ be a compact connected semi simple Lie group, then for any vector $u\in\mathfrak{g}$, the adjoint orbit $\mathrm{Ad}(G)u
\subset\mathfrak{g}$ is called a {\it flag manifold}.

A flag manifold $\mathrm{Ad}(G)u$ can be presented as a homogeneous manifold $G/C_G(u)$. To determine the Lie algebra $\mathrm{Lie}(C_G(u))=\mathfrak{c}_\mathfrak{g}(u)
=\mathfrak{c}(\mathfrak{c}_\mathfrak{g}(u))\oplus
[\mathfrak{c}_\mathfrak{g}(u),\mathfrak{c}_\mathfrak{g}(u)]$,
we only need to determine its central summand
$\mathfrak{c}(\mathfrak{c}_\mathfrak{g}(u))$ and its
semi simple summand $[\mathfrak{c}_\mathfrak{g}(u),\mathfrak{c}_\mathfrak{g}(u)]$
as following \cite{Al1997}.

Let $\mathfrak{t}$ be a Cartan subalgebra, for which we have
the root system $\Delta_\mathfrak{g}$ of $\mathfrak{g}$, and
a prime root system $\{\alpha_1,\cdots,\alpha_n\}$, where $n=\dim\mathfrak{t}$ is the rank. We usually use an $\mathrm{Ad}(G)$-invariant inner product $\langle\cdot,\cdot\rangle_{\mathrm{bi}}$ on $\mathfrak{g}$ to identify $\mathfrak{t}$ with $\mathfrak{t}^*$,
then the roots are viewed as vectors in $\mathfrak{t}$.
Obviously $\mathfrak{c}_\mathfrak{g}(u)$ contains $\mathfrak{t}$, so $\mathfrak{c}_\mathfrak{g}(u)$ is a {\it regular subalgebra} of $\mathfrak{g}$, i.e., each root or root space of $\mathfrak{c}_\mathfrak{g}(u)$ is
also a root or root space of $\mathfrak{g}$ respectively.

By suitable $G$-conjugation or Weyl group action, we may assume that $u$ satisfies $\langle\alpha_i,u\rangle_{\mathrm{bi}}\geq0$ for each $i$.
Suppose we have $\langle\alpha_i,u\rangle_{\mathrm{bi}}=0$ for $1\leq i\leq k$ and $\langle\alpha_i,u\rangle_{\mathrm{bi}}>0$ for $k<i\leq n$. Then the central summand
$\mathfrak{c}(\mathfrak{c}_\mathfrak{g}(u))$ in $\mathfrak{c}_\mathfrak{g}(u)$ is $(n-k)$-dimensional, linearly spanned by $\alpha_i$ with $k<i\leq n$. For the semi simple summand $[\mathfrak{c}_\mathfrak{g}(u),
\mathfrak{c}_\mathfrak{g}(u)]$,
$\{\alpha_1,\cdots,\alpha_k\}$ provides a prime root system, i.e., each root of $\mathfrak{g}$ belongs to $[\mathfrak{c}_\mathfrak{g}(u),
\mathfrak{c}_\mathfrak{g}(u)]$
if and only if it is a linear combination of $\alpha_i$ with $1\leq i\leq k$. To get the Dynkin diagram for $[\mathfrak{c}_\mathfrak{g}(u),
\mathfrak{c}_\mathfrak{g}(u)]$, we may start with that for $\mathfrak{g}$ and then delete
all the dots for $\alpha_i$ with $k<i\leq n$ and all the edges connecting to to these dots.

\section{Riemannian equigeodesic and Riemannian equigeodesic space}
\subsection{Riemannian equigeodesic and Riemannian equigeodesic vector}
A smooth curve $c(t)$ on a homogeneous manifold $G/H$ is called a {\it Riemannian equigeodesic}, if it is a homogeneous geodesic for any $G$-invariant Riemannian metric on $G/H$. A vector $u\in\mathfrak{g}$ is called
a {\it Riemannian equigeodesic vector} if $c(t)=\exp tu\cdot o$ is a Riemannian equigeodesic.

Now suppose $(G,\langle\cdot,\cdot\rangle_{\mathrm{bi}})$ is quasi compact and $\mathfrak{g}=\mathfrak{h}+\mathfrak{m}$ is the corresponding orthogonal reductive decomposition for $G/H$.
Using Lemma \ref{lemma-1}, the following criterion for Riemannian equigeodesic vector can be easily deduced.

\begin{lemma}\label{lemma-2}
Let $G/H$ be a homogeneous manifold with a quasi compact $(G,\langle\cdot,\cdot\rangle_{\mathrm{bi}})$ and the corresponding orthogonal reductive decomposition $\mathfrak{g}=\mathfrak{h}+\mathfrak{m}$. Then $u\in\mathfrak{g}$ is an equigeodesic vector for $G/H$
if and only if $u\notin\mathfrak{h}$ and $[\Lambda(u_\mathfrak{m}),u]_\mathfrak{m}=0$ for every metric operator $\Lambda$.
\end{lemma}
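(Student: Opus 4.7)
The plan is to reduce the problem to applying the Riemannian version of Lemma \ref{lemma-1} to each $G$-invariant Riemannian metric on $G/H$, and then to algebraically repackage the resulting inner-product criterion using the metric operator $\Lambda$. Since $G$-invariant Riemannian metrics correspond bijectively to metric operators $\Lambda$, and since the condition $u\notin\mathfrak{h}$ is metric-independent, the whole content of the lemma boils down to a single identity: for a given $\Lambda$, the geodesic criterion $\langle u_\mathfrak{m},[\mathfrak{m},u]_\mathfrak{m}\rangle=0$ should be equivalent to $[\Lambda(u_\mathfrak{m}),u]_\mathfrak{m}=0$.

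To establish this identity I would proceed in three short steps. First, for each $w\in\mathfrak{m}$, rewrite $\langle u_\mathfrak{m},[w,u]_\mathfrak{m}\rangle$ as $\langle \Lambda(u_\mathfrak{m}),[w,u]_\mathfrak{m}\rangle_{\mathrm{bi}}$ using the defining relation $\langle x,y\rangle=\langle x,\Lambda(y)\rangle_{\mathrm{bi}}$ together with the self-adjointness of $\Lambda$ with respect to $\langle\cdot,\cdot\rangle_{\mathrm{bi}}$. Second, drop the $\mathfrak{m}$-projection on the right by exploiting the $\langle\cdot,\cdot\rangle_{\mathrm{bi}}$-orthogonality of $\mathfrak{m}$ and $\mathfrak{h}$, turning the expression into $\langle \Lambda(u_\mathfrak{m}),[w,u]\rangle_{\mathrm{bi}}$. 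Third, apply the $\mathrm{ad}$-invariance of $\langle\cdot,\cdot\rangle_{\mathrm{bi}}$ to move the bracket onto the other factor, obtaining (up to a sign) $\langle [\Lambda(u_\mathfrak{m}),u],w\rangle_{\mathrm{bi}}$. Letting $w$ range over all of $\mathfrak{m}$ and once more invoking the $\mathrm{bi}$-orthogonality, the vanishing for every such $w$ is equivalent to $[\Lambda(u_\mathfrak{m}),u]_\mathfrak{m}=0$. Every step is manifestly reversible, so the equivalence follows in both directions.

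Because the lemma is a direct translation via the standard bi-invariance trick, I do not anticipate any substantive obstacle. The only point worth checking carefully is that the collection of metric operators $\Lambda$ appearing in the conclusion matches exactly the collection of $G$-invariant Riemannian metrics to which Lemma \ref{lemma-1} is applied; this identification is already supplied by the paragraph preceding the lemma that sets up the one-to-one correspondence between invariant Riemannian metrics and $\mathrm{Ad}(H)$-invariant, $\langle\cdot,\cdot\rangle_{\mathrm{bi}}$-positive definite self-adjoint endomorphisms of $\mathfrak{m}$.
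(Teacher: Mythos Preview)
Your proposal is correct and follows exactly the route the paper indicates: the paper states only that the lemma ``can be easily deduced'' from Lemma~\ref{lemma-1} and gives no further details, and your three-step bi-invariance computation is precisely the standard way to fill in that deduction. There is nothing to add or correct.
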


Since we can choose $\Lambda=\mathrm{id}$, Lemma
\ref{lemma-2} provides $[u_\mathfrak{m},u_\mathfrak{h}]=
[u_\mathfrak{m},u]_\mathfrak{m}=0$ when $u$ is a Riemannian equigedesic vector, i.e., $u$ and $u_\mathfrak{m}$ generates the same Riemannian equigeodesic $c(t)=\exp tu \cdot o=\exp tu_\mathfrak{m}\exp tu_\mathfrak{h}\cdot o=\exp tu_\mathfrak{m}\cdot o$.

To summarize, discussion for Riemannian equigeodesics on $G/H$ with a quasi compact $(G,\langle\cdot,\cdot\rangle_{\mathrm{bi}})$ and the corresponding orthogonal reductive decomposition $\mathfrak{g}=\mathfrak{h}+\mathfrak{m}$ can be reduced to that for Riemannian equigeodesic vectors in $\mathfrak{m}\backslash\{0\}$, i.e., $c(t)=\exp tv \cdot x$ is
a Riemannian equigeodesic passing $x=g\cdot o$ if and only if $(\mathrm{Ad}(g^{-1})v)_\mathfrak{m}$ is a Riemannian equigeodesic vector.
\label{Section-3-1}
\subsection{Riemannian equigeodesic space}
Now we define a Riemannian equigeodesic space.

\begin{definition} \label{definition-1}
We call a homogeneous manifold $G/H$ Riemannian equigeodesic or a Riemannian equigeodesic space
if for each $x\in G/H$ and each nonzero $y\in T_x(G/H)$, there exists a Riemannian equigeodesic $c(t)$ with $c(0)=x$ and $\dot{c}(0)=y$.
\end{definition}


When $G$ is quasi compact, we can use Lemma \ref{lemma-2} and the observation
in the last paragraph of Section \ref{Section-3-1} to give
the following equivalent description for Riemannian equigeodesic space (the proof is easy and skipped).

\begin{lemma}\label{lemma-7}
Let $G/H$ be a homogeneous manifold with quasi compact
$(G,\langle\cdot,\cdot\rangle_{\mathrm{bi}})$ and the corresponding orthogonal reductive decomposition $\mathfrak{g}=\mathfrak{h}+\mathfrak{m}$. Then $G/H$
is {Riemannian equigeodesic} if and only if each nonzero $u\in\mathfrak{m}$ is a Riemannian equigeodesic vector, i.e.,
$[\Lambda(u),u]_\mathfrak{m}=0$ for each $u\in\mathfrak{m}\backslash\{0\}$ and every metric operator $\Lambda$.
\end{lemma}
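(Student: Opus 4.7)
The plan is to show both directions by translating Definition \ref{definition-1} into the infinitesimal criterion, using $G$-homogeneity to move the basepoint to the origin $o=eH$ and then invoking Lemma \ref{lemma-2}. Throughout, I identify $T_o(G/H)$ with $\mathfrak{m}$ in the standard way, so that if $x=g\cdot o$ then $dL_g\colon \mathfrak{m}\to T_x(G/H)$ is a linear isomorphism, and $c(t)=\exp(tv)\cdot x$ has initial velocity $dL_g(\mathrm{Ad}(g^{-1})v)_\mathfrak{m}$.

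For the $(\Leftarrow)$ direction, I would take an arbitrary $x\in G/H$ and nonzero $y\in T_x(G/H)$, write $x=g\cdot o$, and set $u=dL_{g^{-1}}(y)\in\mathfrak{m}\setminus\{0\}$. The hypothesis gives $[\Lambda(u),u]_\mathfrak{m}=0$ for every metric operator $\Lambda$; by Lemma \ref{lemma-2}, $u$ is a Riemannian equigeodesic vector, so $\gamma(t)=\exp(tu)\cdot o$ is a homogeneous geodesic for every $G$-invariant Riemannian metric. Translating by $g$ produces $c(t)=g\cdot\gamma(t)=\exp(t\,\mathrm{Ad}(g)u)\cdot x$; since each $G$-invariant Riemannian metric is $G$-isometric, $c$ remains a homogeneous geodesic for every such metric, with $c(0)=x$ and $\dot{c}(0)=dL_g(u)=y$. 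Hence $G/H$ is Riemannian equigeodesic.

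For the $(\Rightarrow)$ direction, I would take any nonzero $u\in\mathfrak{m}$, view it as the tangent vector $y\in T_o(G/H)$, and apply Definition \ref{definition-1} at $x=o$: there exists a Riemannian equigeodesic $c(t)$ with $c(0)=o$, $\dot{c}(0)=u$. Such a curve has the form $c(t)=\exp(tv)\cdot o$ for some $v\in\mathfrak{g}$, and since it is a homogeneous geodesic for every $G$-invariant Riemannian metric, $v$ is a Riemannian equigeodesic vector. Then Lemma \ref{lemma-2} gives $v\notin\mathfrak{h}$ and $[\Lambda(v_\mathfrak{m}),v]_\mathfrak{m}=0$ for every $\Lambda$; choosing $\Lambda=\mathrm{id}$ yields $[v_\mathfrak{m},v_\mathfrak{h}]=0$, from which $c(t)=\exp(tv_\mathfrak{m})\cdot o$ by the observation at the end of Section \ref{Section-3-1}. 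Comparing initial velocities forces $v_\mathfrak{m}=u$, and then the criterion of Lemma \ref{lemma-2} applied to the vector $u\in\mathfrak{m}$ reads precisely $[\Lambda(u),u]_\mathfrak{m}=0$ for every metric operator $\Lambda$.

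There is no real obstacle here; the argument is pure bookkeeping, and the only subtlety worth flagging in the write-up is that the a priori general form $\exp(tv)\cdot o$ of a homogeneous geodesic (with arbitrary $v\in\mathfrak{g}$) must be reduced to the $\mathfrak{m}$-component before matching initial velocities, which is exactly what the remark following Lemma \ref{lemma-2} provides. This is why the author marks the proof as easy and omits it.
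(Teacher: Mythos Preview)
Your proposal is correct and follows exactly the approach the paper itself outlines: the paper omits the proof, calling it easy, and refers back to the observation at the end of Section~\ref{Section-3-1} that a curve $c(t)=\exp(tv)\cdot x$ through $x=g\cdot o$ is a Riemannian equigeodesic if and only if $(\mathrm{Ad}(g^{-1})v)_\mathfrak{m}$ is a Riemannian equigeodesic vector, which is precisely what you unpack via Lemma~\ref{lemma-2}. Your flag about reducing a general $v\in\mathfrak{g}$ to $v_\mathfrak{m}$ before matching initial velocities is exactly the point the paper makes in the paragraph following Lemma~\ref{lemma-2}.
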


For example, an isotropy irreducible $G/H$ with a compact $G$ is Riemannian equigeodesic. A connected Abelian Lie group $G=G/\{e\}$ is Riemannian equigeodesic. More generally, we have the following example.
%
\begin{example}\label{example-1}
Suppose that
$G/H=(G_0\times \cdots\times G_k)/(H_0\times \cdots\times H_k)=G_0/H_0\times G_1/H_1\times\cdots\times G_k/H_k$,
in which $G_0$ is a connected Abelian Lie group, $H_0=\{e\}$, and for each $i>0$,
$G_i$ and $H_i$ are compact and $G_i/H_i$ is isotropy irreducible. Applying Schur Lemma, any metric operator for $G/H$ has the form $\Lambda=\Lambda_0\oplus c_1\mathrm{id}|_{\mathfrak{m}_1}\oplus\cdots\oplus c_k\mathrm{id}|_{\mathfrak{m}_k}$, in which $\Lambda_0$ is an endomorphism on $\mathfrak{g}_0$.
Using Lemma \ref{lemma-7}, it is easy to see that $G/H$ is a Riemannian equigeodesic space.
\end{example}

The following theorem indicates that Riemannian equigeodesic space is such a strong condition that Example \ref{example-1} becomes a typical model.

\begin{theorem}\label{main-thm-1}
Let $G/H$ be a homogeneous manifold on which $G$ acts almost effectively. Suppose that $G$ is connected, simply connected and quasi compact, and $H$ is a connected. Then
$G/H$ is Riemannian equigeodesic if and only if we have the decompositions $G=G_1\times \cdots\times G_m$ and $H=H_1\times\cdots\times H_m$, such that each $G_i/H_i$
is one of the following
\begin{enumerate}
\item a real line, i.e., $G_i=\mathbb{R}$ and $H_i=\{0\}$;
\item a strongly isotropy irreducible $G_i/H_i$ with compact connected semi simple $G_i$ and connected $H_i$.
\end{enumerate}
\end{theorem}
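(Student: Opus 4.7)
For the forward (``if'') direction, if $G/H$ decomposes as in the statement (essentially Example~\ref{example-1}), the metric operators are pinned by Schur's Lemma: any $\mathrm{Ad}(H)$-invariant $\Lambda$ acts as an arbitrary positive definite operator $\Lambda_0$ on $\mathfrak{g}_0=\mathbb{R}^n$ and as a scalar $c_i\,\mathrm{id}_{\mathfrak{m}_i}$ on each strongly isotropy irreducible factor. Since distinct $\mathfrak{g}_i$ commute in $\mathfrak{g}$, the bracket $[\Lambda u,u]_\mathfrak{m}$ decomposes as $\sum_i[\Lambda_i u_i,u_i]_{\mathfrak{m}_i}$, which vanishes factor-wise (abelian on $\mathfrak{g}_0$, equal to $c_i[u_i,u_i]=0$ on each irreducible piece). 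Lemma~\ref{lemma-7} then gives Riemannian equigeodesic.

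For the reverse direction, first peel off the center. Quasi compactness of $(G,\langle\cdot,\cdot\rangle_{\mathrm{bi}})$ gives $\mathfrak{g}=\mathfrak{z}(\mathfrak{g})\oplus\mathfrak{g}^s$ with $\mathfrak{g}^s$ compact semisimple; compact embedding of $H$ plus the triviality of $\mathrm{ad}|_{\mathfrak{z}(\mathfrak{g})}$ forces $\mathfrak{h}\subseteq\mathfrak{g}^s$, and simple connectedness of $G$ yields a global factorization $G\cong\mathbb{R}^k\times G^s$ contributing $k$ real-line factors to the decomposition. Assume from now on that $G$ is compact semisimple simply connected, so $G=G^{(1)}\times\cdots\times G^{(N)}$ with each $G^{(j)}$ simple. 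The goal is to partition $\{1,\dots,N\}$ into clusters $I_1,\dots,I_m$ so that with $G_a=\prod_{j\in I_a}G^{(j)}$ and $H_a=H\cap G_a$ one has $H=H_1\times\cdots\times H_m$ and each $G_a/H_a$ strongly isotropy irreducible.

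The driving tool is Lemma~\ref{lemma-7}. Given any orthogonal $\mathrm{Ad}(H)$-invariant decomposition $\mathfrak{m}=\mathfrak{m}'\oplus\mathfrak{m}''$ in which $\mathfrak{m}'$ and $\mathfrak{m}''$ share no common irreducible $\mathrm{Ad}(H)$-summand, $\Lambda=a\,\mathrm{id}_{\mathfrak{m}'}+b\,\mathrm{id}_{\mathfrak{m}''}$ is an admissible metric operator, and the equigeodesic property for $a\neq b$ forces $[\mathfrak{m}',\mathfrak{m}'']\subseteq\mathfrak{h}$. Ad-invariance of $\langle\cdot,\cdot\rangle_{\mathrm{bi}}$ then upgrades this to $[\mathfrak{m}',\mathfrak{m}']\subseteq\mathfrak{h}\oplus\mathfrak{m}'$ (and symmetrically), so $\mathfrak{h}\oplus\mathfrak{m}'$ and $\mathfrak{h}\oplus\mathfrak{m}''$ are Lie subalgebras of $\mathfrak{g}$. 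Iterating this construction down to the finest admissible orthogonal splitting, and matching the resulting pieces with the simple-ideal decomposition of $\mathfrak{g}$, one extracts the clusters $I_a$. Within each cluster $\mathfrak{m}_a$ must be $\mathrm{Ad}(H_a)$-irreducible (further splitting would contradict finest-ness), so $G_a/H_a$ is strongly isotropy irreducible; $H_a$ is connected as a continuous image of the connected $H$.

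The main obstacle is bridging the gap from $[\mathfrak{m}',\mathfrak{m}'']\subseteq\mathfrak{h}$ (rather than $=0$) to an actual direct-sum splitting of $\mathfrak{g}$ into ideals matching the clusters. Nontrivial diagonal embeddings such as $\Delta SU(2)\subset SU(2)\times SU(2)$ (producing $S^3$) show the inclusion can be genuinely nonzero, and such examples must form a single cluster rather than split. The technical heart of the argument is to show that whenever the naive algebraic data seems to permit a further splitting, the equigeodesic condition — exploiting in particular the extra freedom in $\Lambda$ within isotypic components of multiplicity $\geq 2$ — produces a violation; equivalently, the Goursat-type diagonals appearing in $\mathfrak{h}\subseteq\bigoplus_j\mathfrak{g}^{(j)}$ must align exactly with the $\mathrm{Ad}(H_a)$-irreducibility of each $\mathfrak{m}_a$. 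Executing this alignment with due attention to real, complex, and quaternionic types of irreducible summands is the delicate part of the proof.
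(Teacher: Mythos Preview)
Your ``if'' direction is fine and matches the paper. The gap is in the ``only if'' direction: you correctly identify the obstacle --- passing from $[\mathfrak{m}',\mathfrak{m}'']\subseteq\mathfrak{h}$ to an honest ideal decomposition of $\mathfrak{g}$ --- but you do not actually close it. You propose to attack it by exploiting further freedom in $\Lambda$ on isotypic components and by a type-by-type (real/complex/quaternionic) analysis; this is not carried out, and in fact it is the wrong direction. The paper's argument never returns to the equigeodesic hypothesis after establishing the analogues of your first two observations. Instead, having fixed an irreducible decomposition $\mathfrak{m}=\bigoplus_i\mathfrak{m}_i$ and shown $[\mathfrak{m}_i,\mathfrak{m}_j]\subset\mathfrak{h}$ for $i\neq j$ and $[\mathfrak{m}_i,\mathfrak{m}_i]\subset\mathfrak{h}+\mathfrak{m}_i$, it proves purely algebraically that each $[\mathfrak{m}_i,\mathfrak{m}_j]$ with $i\neq j$ is an ideal of $\mathfrak{g}$ contained in $\mathfrak{h}$: the Jacobi identity and bi-invariance of $\langle\cdot,\cdot\rangle_{\mathrm{bi}}$ give $[[\mathfrak{m}_i,\mathfrak{m}_j],\mathfrak{m}_k]=0$ for all $k$, and $[[\mathfrak{m}_i,\mathfrak{m}_j],\mathfrak{h}]\subset[\mathfrak{m}_i,\mathfrak{m}_j]$ is immediate. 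Almost effectiveness then forces $[\mathfrak{m}_i,\mathfrak{m}_j]=0$. From there $\mathfrak{g}_i=[\mathfrak{m}_i,\mathfrak{m}_i]+\mathfrak{m}_i$ are commuting ideals summing to $\mathfrak{g}$ (any leftover ideal lies in $\mathfrak{h}$, hence vanishes), and simple connectedness lifts this to groups.

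Two smaller points. First, your restriction that $\mathfrak{m}'$ and $\mathfrak{m}''$ ``share no common irreducible summand'' is unnecessary: for any orthogonal $\mathrm{Ad}(H)$-invariant splitting, $\Lambda=a\,\mathrm{id}_{\mathfrak{m}'}\oplus b\,\mathrm{id}_{\mathfrak{m}''}$ is already $\mathrm{Ad}(H)$-equivariant, so the paper simply takes any irreducible decomposition and gets $[\mathfrak{m}_i,\mathfrak{m}_j]\subset\mathfrak{h}$ for all $i\neq j$ without worrying about multiplicities. Dropping that restriction removes the need for your isotypic analysis entirely. Second, your strategy of first decomposing $G$ into simple factors and then clustering them is backwards relative to the paper: the ideals $\mathfrak{g}_i$ are produced directly from the $\mathfrak{m}_i$, and only afterwards does one observe (via Wolf's classification) that each non-Euclidean factor has compact semisimple $G_i$.
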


Recall that a homogeneous manifold $G/H$ is {\it isotropy irreducible} if the isotropic $H$-action is irreducible, and it is {\it strongly isotropy irreducible} if  the isotropy action is irreducible when restricted to $H_0$. Compact strongly isotropy irreducible spaces and compact isotropy irreducible spaces have been classified in \cite{Kr1975,Ma1961-1,Ma1961-2,Ma1966,Wo1968} and \cite{WZ1991} respectively. There exists many examples of isotropy irreducible $G/H$ which is not strongly isotropy irreducible.

\begin{proof}
Firstly we assume that $G/H$ is Riemannian equigeodesic and prove the decompositions.

Let $\langle\cdot,\cdot\rangle_{\mathrm{bi}}$ be an $\mathrm{Ad}(G)$-invariant inner product on $\mathfrak{g}$ and $\mathfrak{g}=\mathfrak{h}+\mathfrak{m}$
the corresponding orthogonal reductive decomposition. We  further $\mathrm{Ad}(H)$-equivariantly decompose $\mathfrak{m}$ as $\mathfrak{m}=\mathfrak{m}_1+\cdots+\mathfrak{m}_m$, such that the each $\mathfrak{m}_i$ is irreducible.

{\bf Claim 1}: $[\mathfrak{m}_i,\mathfrak{m}_j]\subset\mathfrak{h}$ when $i\neq j$.

For any $u_i\in\mathfrak{m}_i$ and $u_j\in\mathfrak{m}_j$,
we apply Lemma \ref{lemma-7} to $u=u_i+u_j$ and  $\Lambda=\mathrm{id}|_{\mathfrak{m}_i}\oplus
2\mathrm{id}|_{\sum_{l\neq i}\mathfrak{m}_l}$, and get $[u_i,u_j]_\mathfrak{m}=[\Lambda(u),u]_\mathfrak{m}=0$,
which proves Claim 1.

{\bf Claim 2}: $[\mathfrak{m}_i,\mathfrak{m}_i]
\subset\mathfrak{h}+\mathfrak{m}_i$
for each $i$.

By the $\mathrm{Ad}(G)$-invariancy of $\langle\cdot,\cdot\rangle_{\mathrm{bi}}$,
\begin{eqnarray*}
\langle [\mathfrak{m}_i,\mathfrak{m}_i],\mathfrak{m}_j \rangle_{\mathrm{bi}}
\subset\langle [\mathfrak{m}_i,\mathfrak{m}_j],\mathfrak{m}_i \rangle_{\mathrm{bi}}
\subset\langle\mathfrak{h},\mathfrak{m}_i\rangle_{\mathrm{bi}}
=0,\quad\forall j\neq i.
\end{eqnarray*}
So we have $[\mathfrak{m}_i,\mathfrak{m}_i]\subset\mathfrak{h}+\mathfrak{m}_i$,
which proves Claim 2.

{\bf Claim 3}: $[\mathfrak{m}_i,\mathfrak{m}_j]=0$ when $i\neq j$.

By the almost effectiveness of the $G$-action on $G/H$, we only need to prove that $[\mathfrak{m}_i,\mathfrak{m}_j]$ with $i\neq j$ is an ideal of $\mathfrak{g}$ contained in $\mathfrak{h}$.

It is obvious that $[\mathfrak{m}_i,\mathfrak{m}_j]$ is an ideal of $\mathfrak{h}$, i.e., $[[\mathfrak{m}_i,\mathfrak{m}_j],\mathfrak{h}]\subset
[\mathfrak{m}_i,\mathfrak{m}_j]$.
When $i\neq j\neq k\neq i$, we have
$[[\mathfrak{m}_i,\mathfrak{m}_j],\mathfrak{m}_k]=0$
because by Claim 1, $[[\mathfrak{m}_i,\mathfrak{m}_j],\mathfrak{m}_k]\subset
[\mathfrak{h},\mathfrak{m}_k]\subset\mathfrak{m}_{k}$ on one hand,
and
$[[\mathfrak{m}_i,\mathfrak{m}_j],\mathfrak{m}_k]
\subset[[\mathfrak{m}_i,\mathfrak{m}_k],\mathfrak{m}_j]+
[\mathfrak{m}_i,[\mathfrak{m}_j,\mathfrak{m}_k]]
\subset \mathfrak{m}_j+\mathfrak{m}_i$ on the other hand.
The subspace $[[\mathfrak{m}_i,\mathfrak{m}_j],\mathfrak{m}_i]\subset[\mathfrak{h},\mathfrak{m}_i]\subset\mathfrak{m}_i$
vanishes because
\begin{eqnarray*}
& &\langle[[\mathfrak{m}_i,\mathfrak{m}_j],\mathfrak{m}_i],
\mathfrak{m}_i\rangle_{\mathrm{bi}}
=\langle[[\mathfrak{m}_i,\mathfrak{m}_j],[\mathfrak{m}_i,
\mathfrak{m}_i]\rangle_{\mathrm{bi}}
=\langle[\mathfrak{m}_j,[\mathfrak{m}_i,[\mathfrak{m}_i,
\mathfrak{m}_i]]\rangle_{\mathrm{bi}}\\
&= &
\subset\langle\mathfrak{m}_j,[\mathfrak{m}_i,\mathfrak{h}+
\mathfrak{m}_i]\rangle_{\mathrm{bi}}
\subset\langle\mathfrak{m}_j,\mathfrak{h}+
\mathfrak{m}_i\rangle_{\mathrm{bi}}=0,
\end{eqnarray*}
where we have used Claim 2. For the same reason,
$[\mathfrak{m}_i,\mathfrak{m}_j],\mathfrak{m}_j]$ also vanishes. Summarizing above argument, we see $[\mathfrak{m}_i,\mathfrak{m}_j]\subset\mathfrak{h}$ with $i\neq j$ is an ideal of $\mathfrak{g}$. Then Claim 3 is proved.

{\bf Claim 4}: we have Lie algebra direct sum decompositions $\mathfrak{g}=\mathfrak{g}_1\oplus
\cdots\oplus\mathfrak{g}_m$ and $\mathfrak{h}=\mathfrak{h}_1\oplus\cdots\oplus\mathfrak{h}_m$, in which $\mathfrak{g}_i=[\mathfrak{m}_i,\mathfrak{m}_i]+
\mathfrak{m}_i$ and $\mathfrak{h}_i=[\mathfrak{m}_i,\mathfrak{m}_i]_\mathfrak{h}$.

By Claim 2 and Claim 3, each $\mathfrak{g}_i\subset\mathfrak{g}$ is an ideal of $\mathfrak{g}$ with $\mathfrak{g}_i\cap\mathfrak{m}=\mathfrak{m}_i$.  So we have a Lie algebra direct sum decomposition
$\mathfrak{g}=\mathfrak{g}_1\oplus\cdots\oplus
\mathfrak{g}_m
\oplus\mathfrak{g}'$, in which the ideal $\mathfrak{g}'$ is the $\langle\cdot,\cdot\rangle_{\mathrm{bi}}$-orthogonal complement of $\mathfrak{g}_1\oplus\cdots\oplus\mathfrak{g}_m$.
Obviously, $\mathfrak{m}\subset\mathfrak{g}_1\oplus\cdots
\oplus\mathfrak{g}_m$, so the ideal $\mathfrak{g}'$ of $\mathfrak{g}$ is contained in $\mathfrak{h}$, which must vanish by the almost effectiveness. So we get the decomposition for $\mathfrak{g}$.
Meanwhile Claim 2 implies that each $\mathfrak{g}_i$ is compatible with the orthogonal reductive decomposition, i.e. $\mathfrak{g}_i=\mathfrak{h}_i+\mathfrak{m}_i$, in which $\mathfrak{h}_i=\mathfrak{g}_i\cap\mathfrak{h}=
[\mathfrak{m}_i,\mathfrak{m}_i]_\mathfrak{h}$
and $\mathfrak{m}_i=\mathfrak{g}_i\cap\mathfrak{g}$.
The decomposition for $\mathfrak{h}$ follows immediately.
Now Claim 4 is proved.

Finally, we consider the corresponding decompositions for $G$, $H$ and $G/H$.

Since $G$ is connected and simply connected, Claim 4 provides the Lie group product decomposition $G=G_1\times\cdots\times G_m$, in which each $G_i$ is a connected simply connected quasi compact Lie subgroup generated by $\mathfrak{g}_i$.
In each $G_i$, we have a connected Lie subgroup $H_i$ with $\mathrm{Lie}(H_i)=\mathfrak{h}_i$. Then $H= H_1\times\cdots\times H_n$ because both sides are connected Lie groups generated by the same Lie subalgebra. Obviously $H_i=H\cap G_i$. By the closeness and connectedness of $H$, each $H_i$ is a closed connected subgroup of $G_i$. The almost effectiveness of the $G$-action on $G/H$ implies the almost effectiveness of the $G_i$-action on $G_i/H_i$. In the decomposition $G/H=G_1/H_1\times \cdots\times G_m/H_m$, each $G_i/H_i$ has an strongly irreducible isotropy representation. The classification in \cite{Wo1968} indicates that $G_i$ must be compact and semi simple, unless
$G_i=\mathbb{R}$ and $H_i=\{0\}$.

To summarize, above argument proves Theorem \ref{main-thm-1} in one direction. The other direction is obvious by the discussion for Example \ref{example-1}.
\end{proof}

This proof is self contained. It can be simplified using some results in Section 5 of \cite{Ni2017}. Another possible proof is to verify that each $G$-invariant Riemannian metric on $G/H$ is normal and then use an analog of the main theorem in \cite{Be1995}.

\begin{remark}\label{remark}
Theorem \ref{main-thm-1} only classifies some simply connected Riemannian equigeodesic spaces.
The universal cover of a Riemannian equigeodesic space may not be Riemannian equigeodesic any more. For example, $G/H=SU(3)/T^2\mathbb{Z}_3$ is isotropy irreducible, so it is Riemannian equigeodesic. Its universal cover $SU(3)/T^2$ is not Riemannian equigeodesic by Theorem \ref{main-thm-1}, because it is not strongly isotropy irreducible. Using the classification work in \cite{WZ1991}, many other similar examples can be found.
\end{remark}
\section{Finsler equigeodesic and Finsler equigeodesic space}
\subsection{Finsler equigeodesic and Finsler equigeodesic vector}
The definitions for {\it Finsler equigeodesic} and {\it Finsler equigeodesic vector} were proposed in \cite{TX2022}.

\begin{definition}
A smooth curve on $G/H$ is called a {\it Finsler equigeodesic} if it is a homogeneous geodesic for any $G$-invariant Finsler metric
on $G/H$. A vector $u\in\mathfrak{g}$ is called a {\it Finsler equigeodesic vector} if it generates an equigeodesic $c(t)=\exp tu\cdot o$ on $G/H$.
\end{definition}

Equivalently speaking, $u$ is a Finsler equigeodesic vector if and only if it is a geodesic vector for any $G$-invariant Finsler metric on $G/H$.
Obviously
any Finsler equigeodesic is also
a Riemannian equigeodesic.  So Finsler equigeodesic
is a stronger algebraic property than Riemannian equigeodesic.
For Finsler equigeodesic vector, the observation is similar.
\subsection{A Criterion and some examples}
Let $G/H$ be a homogeneous manifold with compact
$(G,\langle\cdot,\cdot\rangle_{\mathrm{bi}})$, and the corresponding orthogonal irreducible decomposition $\mathfrak{g}=\mathfrak{h}+\mathfrak{m}$. By the observation in Section \ref{Section-3-1}, the discussion for Finsler equigeodesics can be reduced to that for Finsler equigeodesic vectors in $\mathfrak{m}\backslash\{0\}$.

For any vector $u\in\mathfrak{m}\backslash\{0\}$, we denote $H_u=\{g\in H| \mathrm{Ad}(g) u=u\}$, and $\mathbf{V}_u$ the $\langle\cdot,\cdot\rangle_{\mathrm{bi}}$-orthogonal complement of $[\mathfrak{h},u]$ in $\mathfrak{m}$.
Then the
$\mathrm{Ad}(H_u)$-action preserves $\mathbf{V}_u$.
So we can further decompose $\mathbf{V}_u$ as
$\mathbf{V}_u=\mathbf{V}_{u,0}+\mathbf{V}_{u,1}$, in which
$\mathbf{V}_{u,0}$ is the fixed point set of the $\mathrm{Ad}(H_u)$-action and $\mathbf{V}_{u,1}$ is the
$\langle
\cdot,\cdot\rangle_{\mathrm{bi}}$-orthogonal complement of
$\mathbf{V}_{u,0}$ in $\mathbf{V}_u$.

With above settings, the criterion for $u$ to be a Finsler equigeodesic vector is the following.

\begin{theorem} \label{main-thm-2}
Keep all above assumptions and notations, then
any vector $u\in\mathfrak{m}\backslash\{0\}$ is a Finsler equigeodesic vector if and only if  it satisfies
\begin{equation}\label{003} [u,\mathfrak{m}]_\mathfrak{m}\subset[u,\mathfrak{h}]+
\mathbf{V}_{u,1}.
\end{equation}
\end{theorem}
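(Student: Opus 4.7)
The plan is to translate the Finsler equigeodesic condition on $u$ into the algebraic inclusion (\ref{003}) by pinning down the range of the linear functional $\ell_u^F(w):=\langle u,w\rangle_u^F$ as $F$ varies over all $G$-invariant Finsler metrics. The key dichotomy is that such functionals are simultaneously constrained (vanishing on $[u,\mathfrak{h}]$ and $\mathrm{Ad}(H_u)$-invariant) and, I will argue, sufficiently flexible to realize every functional with those symmetries.

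First I would reduce to $u\in\mathfrak{m}\setminus\{0\}$ via the observation of Section \ref{Section-3-1}. Lemma \ref{lemma-1} then rewrites ``$u$ is Finsler equigeodesic'' as ``$\ell_u^F$ vanishes on $[u,\mathfrak{m}]_\mathfrak{m}$ for every $\mathrm{Ad}(H)$-invariant Minkowski norm $F$ on $\mathfrak{m}$''. Two automatic properties of $\ell_u^F$ are then extracted: (a) $\ell_u^F$ vanishes on $[u,\mathfrak{h}]$, which I will obtain from Lemma \ref{lemma-0} applied with $y=w_1=w_2=u$ together with $C_u^F(u,\cdot,\cdot)\equiv 0$ (the standard consequence of the 1-homogeneity of $F$); (b) $\ell_u^F$ is $\mathrm{Ad}(H_u)$-invariant, because $F$ is $\mathrm{Ad}(H)$-invariant and $H_u$ fixes $u$. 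Using $\langle\cdot,\cdot\rangle_{\mathrm{bi}}$ to identify $\mathfrak{m}^*\cong\mathfrak{m}$, the $\mathrm{Ad}(H_u)$-invariant linear functionals on $\mathfrak{m}$ vanishing on $[u,\mathfrak{h}]$ correspond exactly to vectors in $\mathbf{V}_u^{H_u}=\mathbf{V}_{u,0}$, so each $\ell_u^F$ is encoded by a unique $\tilde{v}_F\in\mathbf{V}_{u,0}$.

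The sufficiency of (\ref{003}) is then immediate: if $[u,\mathfrak{m}]_\mathfrak{m}\subset[u,\mathfrak{h}]+\mathbf{V}_{u,1}$, this subspace is $\langle\cdot,\cdot\rangle_{\mathrm{bi}}$-orthogonal to $\mathbf{V}_{u,0}$, hence $\ell_u^F$ vanishes on it for every admissible $F$. For necessity, suppose $[u,\mathfrak{m}]_\mathfrak{m}\not\subset[u,\mathfrak{h}]+\mathbf{V}_{u,1}$ and pick $w\in[u,\mathfrak{m}]_\mathfrak{m}$ whose $\mathbf{V}_{u,0}$-component $v$ is nonzero. I would then exhibit an $F$ with $\ell_u^F(w)\neq 0$ by perturbing the reference Riemannian metric:
\begin{equation*}
F^2(y)=\langle y,y\rangle_{\mathrm{bi}}+\epsilon\,\phi(y),\qquad 0<\epsilon\ll 1,
\end{equation*}
where $\phi$ is an $\mathrm{Ad}(H)$-invariant, smooth, 2-homogeneous function on $\mathfrak{m}\setminus\{0\}$ whose $\langle\cdot,\cdot\rangle_{\mathrm{bi}}$-gradient at $u$ equals $v$. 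A direct computation yields $\ell_u^F(w)=\langle u,w\rangle_{\mathrm{bi}}+\epsilon\langle v,w\rangle_{\mathrm{bi}}$; the first term vanishes by antisymmetry of $\mathrm{ad}(u)$ with respect to $\langle\cdot,\cdot\rangle_{\mathrm{bi}}$, while the second is nonzero by the choice of $v$, contradicting the assumption that $u$ is Finsler equigeodesic.

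The main obstacle is the construction of $\phi$ with prescribed gradient, which I plan to handle by a slice-type argument. On the unit sphere $S\subset(\mathfrak{m},\langle\cdot,\cdot\rangle_{\mathrm{bi}})$, the orbit $\mathrm{Ad}(H)\cdot(u/\|u\|_{\mathrm{bi}})$ has stabilizer $H_u$ and normal slice $\mathbf{V}_u\cap u^\perp$. By the Slice Theorem, any $\mathrm{Ad}(H_u)$-invariant smooth function on a neighborhood of the origin in the slice extends to an $\mathrm{Ad}(H)$-invariant smooth function on a tubular neighborhood of the orbit in $S$. Taking the restriction of $\langle v,\cdot\rangle_{\mathrm{bi}}$ to the slice (which is $\mathrm{Ad}(H_u)$-invariant because $v\in\mathbf{V}_{u,0}$), extending in this manner, applying a smooth cut-off away from the orbit, and 2-homogenizing in the radial direction produces the desired $\phi$ on $\mathfrak{m}\setminus\{0\}$. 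The strong convexity and positivity of $F^2$ hold automatically for $\epsilon$ sufficiently small.
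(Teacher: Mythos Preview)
Your proposal is correct and follows essentially the same route as the paper: both directions rest on identifying $\bigcap_F\ker\ell_u^F=[u,\mathfrak{h}]+\mathbf{V}_{u,1}$, using Lemma~\ref{lemma-0} for the $[u,\mathfrak{h}]$ part, $\mathrm{Ad}(H_u)$-invariance for the $\mathbf{V}_{u,1}$ part (the paper phrases this as an integration over $H_u$), and a slice-theorem perturbation for the reverse inclusion. The only cosmetic difference is that the paper writes the perturbed norm as $F=\|\cdot\|_{\mathrm{bi}}(1+\epsilon\psi)$ and builds $\psi$ on the orbit-type stratum before extending and averaging, while you use $F^2=\|\cdot\|_{\mathrm{bi}}^2+\epsilon\phi$ and invoke the tubular-neighborhood form of the slice theorem directly.
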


\begin{proof} We first assume $u\in\mathfrak{m}\backslash\{0\}$
satisfies (\ref{003}) and prove it is a Finsler equigeodesic.

Let $F$ be any
$G$-invariant Finsler metric on $G/H$.
Denote $\mathbf{W}_{u,F}=\{w\in\mathfrak{m}| \langle u,w\rangle^F_u=0\}$ the subspace of all directions in which the derivative of $F$ vanishes at $u$.

By Lemma \ref{lemma-0} and the property $C^F_u(u,\cdot,\cdot)\equiv0$ for the Cartan tensor, we immediately get
$\langle u,[\mathfrak{h},u]\rangle^F_u=0$, i.e., $[\mathfrak{h},u]\subset \mathbf{W}_u^F$.

Let $w$ be any vector in $\mathbf{V}_{u,1}$.
Since the Minkowski norm $F=F(o,\cdot)$ on $\mathfrak{m}$
is $\mathrm{Ad}(H)$-invariant, for any $g\in H_u$, we have
\begin{equation}\label{004}
\langle \mathrm{Ad}(g)w,u
\rangle^F_{u}=
\langle \mathrm{Ad}(g)w,\mathrm{Ad}(g)u
\rangle^F_{\mathrm{Ad}(g)u}=\langle w,u\rangle_u^F.
\end{equation}
Since $G$ is compact, $H$ and $H_u$ are also compact. So we can
integrate (\ref{004}) over $H_u$ and get
$$\langle\int_{g\in H_u}\mathrm{Ad}(g)w d\mathrm{vol}_{H_u},u\rangle_u^F=\mathrm{vol}(H_u)\cdot \langle w,u\rangle^F_u,$$
where $d\mathrm{vol}$ is a bi-invariant measure on $H_u$ and $\mathrm{vol}(H_u)=\int_{H_u}d\mathrm{vol}_{H_u}\in(0,+\infty)$.
Since $\int_{g\in H_u}\mathrm{Ad}(g)w d\mathrm{vol}_{H_u}\in \mathbf{V}_{u,1}$ and it is fixed all
$\mathrm{Ad}(H_u)$-actions, i.e., it is also contained in $\mathbf{V}_{u,0}$, we get $\int_{g\in H_u}\mathrm{Ad}(g)w d\mathrm{vol}_{H_u}=0$ and then $\langle w,u\rangle_u^F=0$.

To summarize, we combine above arguments and (\ref{003}), then we see
$$[\mathfrak{m},u]_\mathfrak{m}\subset
[\mathfrak{h},u]+\mathbf{V}_{u,1}\subset \mathbf{W}^F_u.$$
It implies
$\langle u,[\mathfrak{m},u]_\mathfrak{m}\rangle_u^F=0$, i.e.,
$u$ is a geodesic vector for $(G/H,F)$. Since $F$ is chosen arbitrarily, $u$ is a Finsler equigeodesic vector. This ends the proof of Theorem \ref{main-thm-2} in one direction.

We then prove the other direction, i.e., a Finsler equigeodesic $u$ must satisfy (\ref{003}).

Assume conversely that the Finsler equigeodesic vector $u\in\mathfrak{m}\backslash\{0\}$ does not satisfy (\ref{003}). Lemma \ref{lemma-1} implies
$[\mathfrak{m},u]_\mathfrak{m}\subset\bigcap_F \mathbf{W}_u^F$, where the intersection is taken for all $G$-invariant Finsler metrics on $G/H$. So we only need to prove $[u,\mathfrak{h}]+\mathbf{V}_{u,1}=\bigcap_{F}
\mathbf{W}_u^F$.

In above argument, we have already proved $[u,\mathfrak{h}]+\mathbf{V}_{u,1}\subset\bigcap_{F}
\mathbf{W}_u^F$.
To prove the inverse inclusion, we consider any nonzero vector $w\in\mathbf{V}_{u,0}$ and look for a $G$-invariant Finsler metric on $G/H$ with $w\notin\mathbf{W}^F_u$. The construction of $F$ is the following.

Without loss of generality, we may assume $\langle u,u\rangle_{\mathrm{bi}}=1$. By the slice theorem \cite{MY1957}, the compact $\mathrm{Ad}(H)$-action on the unit sphere $S=\{u| \langle u,u\rangle_{\mathrm{bi}}=1,u\in\mathfrak{m}\}\subset\mathfrak{m}$ provides an orbit type stratification $S=S_1\coprod\cdots\coprod S_N$. For each $i$, $S_i$ is an imbedded submanifold in $S$ with
a smooth fiber bundle structure, in which each fiber is
an $\mathrm{Ad}(H)$-orbit of the same type. We may assume  $\mathrm{Ad}(H)u\subset S_1$, then we can find a sufficiently small $\mathrm{Ad}(H)$-invariant open neighborhood $\mathcal{U}$ of $\mathrm{Ad}(H)u$ in $S_1$, such that the quotient map $\pi:\mathcal{U}\rightarrow\mathcal{U}/H\cong \mathbb{R}^k$ is a smooth fiber bundle, the fibers are the
$\mathrm{Ad}(H)$-orbits parametrized as $\mathcal{O}_{x_1,\cdots,x_k}$ for $(x_1,\cdots,x_k)\in\mathbb{R}^k$, with $\mathcal{O}_{0,\cdots,0}=\mathrm{Ad}(H)u$.
Since $\mathbf{V}_{u,0}$ is tangent to $\mathcal{U}$ and
$\mathbf{V}_{u,0}\cap T_u(\mathrm{Ad}(H)u)=0$, we may adjust the parameter space $\mathbb{R}^k$ such that
$\pi_*(w)$ coincides with $\tfrac{\partial}{\partial{x_1}}$ at the origin. We can find a smooth real function $\varphi$ on $\mathbb{R}^k$ with $\tfrac{\partial}{\partial x_1}\varphi(0,\cdots,0)\neq0$ and a sufficiently small compact support.
The function $\varphi\circ\pi$ is viewed as a $\mathrm{Ad}(H)$-invariant smooth function $S_1$ which only takes zero values outside $\mathcal{U}$.

Nextly, we thicken $\mathcal{U}$ to a $\mathrm{Ad}(H)$-invariant neighborhood $\mathcal{U}'$ of $\mathrm{Ad}(H)u$ in $S$. The function $\varphi\circ\pi$ can
be further extended to a compactly supported smooth function $\psi$ on $\mathcal{U}'$. By the averaging process for the $\mathrm{Ad}(H)$-action, the $\mathrm{Ad}(H)$-invariancy of $\psi$ can be achieved.

Finally, with above preparations, we are ready to construct the $G$-invariant Finsler metric.
For any sufficiently small $\epsilon>0$,
\begin{equation}\label{051}
F(y)=\langle y,y\rangle_{\mathrm{bi}}^{1/2}\cdot
(1+\epsilon\cdot\psi(\tfrac{y}{\langle y,y\rangle_{\mathrm{bi}}^{1/2}}))
\end{equation}
induces an $\mathrm{Ad}(H)$-invariant Minkowski norm on $\mathfrak{m}$. The derivative of $F$ at $u$ does not vanish
in the direction of $w$. So
for the $G$-invariant Finsler metric $F$ determined by this Minkowski norm,  we have $w\notin \mathbf{W}^F_u$.

This ends the proof of Theorem \ref{main-thm-2} in the other direction.
\end{proof}

As immediate applications for Theorem \ref{main-thm-2}, we have the following examples.

\begin{example}\label{example-2}
Let $G/H$ be a symmetric space of compact type with the Cartan decomposition $\mathfrak{g}=\mathfrak{h}+\mathfrak{m}$. Then any vector
$u\in\mathfrak{m}\backslash\{0\}$ is a Finsler equigeodesic vector.
\end{example}

Here we call the homogeneous manifold $G/H$ a {\it symmetric space of compact type} if
$G$ is compact semi simple, and $G/H$ has a {\it Cartan decomposition}, i.e., a reductive decomposition $\mathfrak{g}=\mathfrak{h}+\mathfrak{m}$
satisfying $[\mathfrak{m},\mathfrak{m}]\subset\mathfrak{h}$.
Notice that the Cartan decomposition is orthogonal with respect
to the Killing form $B_\mathfrak{g}$ of $\mathfrak{g}$ and we may take $\langle\cdot,\cdot\rangle_{\mathrm{bi}}=
-B_\mathfrak{g}(\cdot,\cdot)$. So the Cartan decomposition
is the corresponding orthogonal reductive decomposition. By Theorem \ref{main-thm-2}, each $u\in\mathfrak{m}\backslash\{0\}$ is a Finsler equigeodesic vector for $G/H$ because $[\mathfrak{m},u]_\mathfrak{m}=0\subset$  in this situation. This example motivates us to study Finsler equigeodesic spaces
(see Section \ref{section-4-3}).

\begin{example}
Let $G/H$ be a homogeneous manifold with compact semi simple $G$ and $\mathrm{rk}G=\mathrm{rk}H$. Then the set
of Finsler equigeodesic vectors is nonempty.
\end{example}
More precisely, Lemma 5.3 in \cite{XDHH2017} provides
the following Finsler equigeodesic vectors.
Let $\mathfrak{t}$ be a Cartan subalgebra of $\mathfrak{g}$ which is contained in $\mathfrak{h}$. Then the reductive decomposition $\mathfrak{g}=\mathfrak{h}+\mathfrak{m}$ for $G/H$ is unique, and we have the following root plane decompositions:
\begin{eqnarray*}
\mathfrak{g}=\mathfrak{t}+\sum_{\alpha\in\Delta_\mathfrak{g}}
\mathfrak{g}_{\pm\alpha},\quad
\mathfrak{h}=\mathfrak{t}+\sum_{\alpha\in\Delta_\mathfrak{h}}
\mathfrak{g}_{\pm\alpha},\quad
\mathfrak{m}=\sum_{\alpha\in \Delta_{\mathfrak{g}}\backslash\Delta_{\mathfrak{h}}}
\mathfrak{g}_{\pm\alpha},
\end{eqnarray*}
where $\Delta_\mathfrak{g}$ and $\Delta_\mathfrak{h}$ are the root systems of $\mathfrak{g}$ and $\mathfrak{h}$ respectively.
Then for any $\alpha\in\Delta_\mathfrak{g}\backslash\Delta_{\mathfrak{h}}$, any vector $u\in\mathfrak{g}_{\pm\alpha}\backslash\{0\}\subset\mathfrak{m}$ is a Finsler equigeodesic vector. This fact can be explained by Theorem \ref{main-thm-2}, because in this occasion we have
$[\mathfrak{g}_{\pm\alpha},\mathfrak{m}]_\mathfrak{m}
\subset
\sum_{\alpha\neq\beta\in\Delta_{\mathfrak{g}}\backslash\Delta_{\mathfrak{h}}
}\mathfrak{g}_{\pm\beta}\subset
\mathbf{V}_{u,1}$.

\begin{example} For a compact Lie group $G$,  $u\in\mathfrak{g}\backslash\{0\}$ is a Finsler equigeodesic vector for $G=G/\{e\}$ if and only if $u\in\mathfrak{c}(\mathfrak{g})$. So in this case, the Riemannian equigeodesics (Riemannian equigeodesic vectors) and the Finsler equigeodesics (Finsler equigeodesic vectors respectively) are the same.
\end{example}

\subsection{Finsler equigeodesic space and a criterion}
\label{section-4-3}
As an analog of Definition \ref{definition-1},
we define  {\it Finsler equigeodesic space} as the following.

\begin{definition}\label{definition-2}
We call a homogeneous manifold $G/H$ Finsler equigeodesic or a Finsler equigeodesic space
if for each $x\in G/H$ and each nonzero $y\in T_x(G/H)$, there exists a Finsler equigeodesic $c(t)$ with $c(0)=x$ and $\dot{c}(0)=y$.
\end{definition}

Let $G/H$ be a homogeneous manifold with compact $(G,
\langle\cdot,\cdot\rangle_{\mathrm{bi}})$, and the corresponding orthogonal reductive decomposition $\mathfrak{g}=\mathfrak{h}+\mathfrak{m}$. Then by the observation in Section \ref{Section-3-1},
Finsler equigeodesic space can be equivalently described as the following.

\begin{lemma}\label{lemma-8}
A homogeneous manifold $G/H$ with a compact $(G,
\langle\cdot,\cdot\rangle_{\mathrm{bi}})$ and the corresponding orthogonal reductive decomposition $\mathfrak{g}=\mathfrak{h}+\mathfrak{m}$
is {Finsler equigeodesic} if and only if each $u\in\mathfrak{m}\backslash\{0\}$ is a Finsler equigeodesic vector.
\end{lemma}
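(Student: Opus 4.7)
The plan is to mimic the (omitted) proof of Lemma \ref{lemma-7}, replacing ``Riemannian'' by ``Finsler'' throughout, and to recycle the reduction already carried out in Section \ref{Section-3-1} via the trivial observation that every Finsler equigeodesic is automatically a Riemannian equigeodesic. The lemma has the shape of a routine transfer result, so the real content is organizational: translate the curve condition at an arbitrary base point back to $o$ using the $G$-action, and then strip away the $\mathfrak{h}$-component of the generating vector.

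For sufficiency, I would pick any $x \in G/H$ and any nonzero $y \in T_x(G/H)$, write $x = g\cdot o$, and transport $y$ back to a nonzero vector $u \in \mathfrak{m}$ using the $G$-equivariant identification of tangent spaces (so that $y = (dL_g)_o(u)$, equivalently $u = \mathrm{pr}_\mathfrak{m}(\mathrm{Ad}(g^{-1})y)$ when $y$ is viewed through the standard reductive identification). By hypothesis $u$ is a Finsler equigeodesic vector, so $\exp(tu)\cdot o$ is a homogeneous geodesic for every $G$-invariant Finsler metric. Since $G$ acts by isometries of every such metric, the left translate $c(t) = g\exp(tu)\cdot o = \exp(t\,\mathrm{Ad}(g)u)\cdot x$ is also a homogeneous geodesic for every such metric, and it satisfies $c(0)=x$, $\dot c(0)=y$. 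So $c$ is a Finsler equigeodesic through $x$ with the prescribed direction, and $G/H$ is Finsler equigeodesic.

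For necessity, fix $u \in \mathfrak{m}\setminus\{0\}$ and use the hypothesis at $x=o$ and $y=u$ to produce a Finsler equigeodesic $c(t)$ with $c(0)=o$ and $\dot c(0)=u$. Since $c$ is in particular a homogeneous geodesic for at least one $G$-invariant Finsler metric, it has the form $c(t) = \exp(tv)\cdot o$ for some $v \in \mathfrak{g}$ with $v_\mathfrak{m} = u$. Because every Finsler equigeodesic is a Riemannian equigeodesic, $v$ is a Riemannian equigeodesic vector, so Lemma \ref{lemma-2} applied with $\Lambda=\mathrm{id}$ together with the $\mathrm{Ad}(H)$-invariance of $\mathfrak{m}$ yields $[v_\mathfrak{m},v_\mathfrak{h}]=0$, exactly as recorded after Lemma \ref{lemma-2}. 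Hence $c(t)=\exp(tv_\mathfrak{m})\exp(tv_\mathfrak{h})\cdot o = \exp(tu)\cdot o$, and this curve is a homogeneous geodesic for every $G$-invariant Finsler metric, which is precisely the statement that $u$ is a Finsler equigeodesic vector.

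There is no substantive obstacle here, so the whole argument takes only a few lines; that is presumably why Lemma \ref{lemma-7}, whose proof is structurally identical, is declared easy and omitted. The only point that requires care is checking that passing from the curve $c(t)$ to the plain exponential $\exp(tu)\cdot o$ does not destroy equigeodesicity, and this is free because the needed identity $[v_\mathfrak{m},v_\mathfrak{h}]=0$ has already been established in Section \ref{Section-3-1} for any Riemannian equigeodesic vector.
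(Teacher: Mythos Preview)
Your proposal is correct and follows precisely the route the paper intends: the paper does not give a formal proof of this lemma but simply invokes ``the observation in Section~\ref{Section-3-1}'', and what you have written is exactly that observation spelled out in full for the Finsler case. The reduction via $[v_\mathfrak{m},v_\mathfrak{h}]=0$ (from Lemma~\ref{lemma-2} with $\Lambda=\mathrm{id}$) and the $G$-translation argument match the paper's treatment of the Riemannian analogue, Lemma~\ref{lemma-7}, whose proof was likewise declared easy and omitted.
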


The Finsler equigeodesic property for $G/H$ with a compact $G$ can be described by natural reductiveness as following.

\begin{lemma}\label{proposition-1}
Let $G/H$ be a homogeneous manifold with a compact $(G,
\langle\cdot,\cdot\rangle_{\mathrm{bi}})$. Then we have the following:
\begin{enumerate}
\item if it has a reductive decomposition $\mathfrak{g}=\mathfrak{h}+\mathfrak{m}'$, with respect to which all $G$-invariant Finsler metrics on $G/H$ are naturally reductive, then $G/H$ is a Finsler equigeodesic space;
\item if $G/H$ is a Finsler equigeodesic space, then any $G$-invariant Finsler metric on $G/H$ is naturally reductive with respect to the orthogonal reductive decomposition $\mathfrak{g}=\mathfrak{h}+\mathfrak{m}$.
\end{enumerate}
\end{lemma}

\begin{proof} (1)
By the description for Finsler natural reductiveness in \cite{DH2010} or Section
\ref{section-2-4}, each nonzero vector in $\mathfrak{m}'$ is a Finsler equigeodesic vector for $G/H$. The projection $\mathrm{pr}_\mathfrak{m}:\mathfrak{g}\rightarrow\mathfrak{m}$ is a linear isomorphism when
restricted to $\mathfrak{m}'$. By the discussion after Lemma \ref{lemma-2}, each nonzero vector in
$\mathfrak{m}$ is Finsler equigeodesic. So $G/H$ is Finsler equigeodesic by Lemma \ref{lemma-8}.

(2) Let $F$ be any $G$-invariant Finsler metric on $G/H$. Lemma \ref{lemma-8} indicates that each nonzero vector $u\in\mathfrak{m}$ generates a geodesic $c(t)=\exp tu\cdot o$ on $(G/H,F)$. By the description in \cite{DH2010} or Section
\ref{section-2-4}, $(G/H,F)$
is naturally reductive with respect to the orthogonal reductive decomposition $\mathfrak{g}=\mathfrak{h}+\mathfrak{m}$.
\end{proof}

By Theorem \ref{main-thm-2} and Lemma \ref{lemma-8},  we have the following criterion for Finsler equigeodesic spaces.

\begin{lemma}\label{lemma-3}
A homogeneous manifold $G/H$ with a compact $(G,
\langle\cdot,\cdot\rangle_{\mathrm{bi}})$ and the corresponding orthogonal reductive decomposition $\mathfrak{g}=\mathfrak{h}+\mathfrak{m}$ is Finsler equigeodesic if and only if
\begin{equation}\label{033}
\mbox{there exists a conic open dense subset }\mathcal{U}\mbox{ of }\mathfrak{m}\backslash\{0\} \mbox{ satisfying }
[\mathfrak{m},u]_\mathfrak{m}
\subset[\mathfrak{h},u], \forall u\in\mathcal{U}.
\end{equation}
\end{lemma}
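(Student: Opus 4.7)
The plan is to derive Lemma \ref{lemma-3} by combining the vector criterion of Theorem \ref{main-thm-2} with Lemma \ref{lemma-8}, after exploiting the orbit type stratification of the compact $\mathrm{Ad}(H)$-action on $\mathfrak{m}\backslash\{0\}$.

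For the ``if'' direction, I would fix such a conic open dense $\mathcal{U}$. Each $u\in\mathcal{U}$ satisfies $[\mathfrak{m},u]_\mathfrak{m}\subset[\mathfrak{h},u]\subset[\mathfrak{h},u]+\mathbf{V}_{u,1}$, so Theorem \ref{main-thm-2} immediately gives that $u$ is a Finsler equigeodesic vector. To promote this to every $u\in\mathfrak{m}\backslash\{0\}$, I would observe that for each $G$-invariant Finsler metric $F$ and each $w\in\mathfrak{m}$ the map $u\mapsto\langle u,[w,u]_\mathfrak{m}\rangle^F_u$ is continuous on $\mathfrak{m}\backslash\{0\}$, so by Lemma \ref{lemma-1} the geodesic vectors of $F$ form a closed subset of $\mathfrak{m}\backslash\{0\}$, and the Finsler equigeodesic vectors are the intersection of these closed subsets over all $F$, hence closed. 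Since $\mathcal{U}$ is dense inside this closed set, every $u\in\mathfrak{m}\backslash\{0\}$ is a Finsler equigeodesic vector, and Lemma \ref{lemma-8} concludes that $G/H$ is Finsler equigeodesic.

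For the ``only if'' direction, I would take $\mathcal{U}$ to be the principal stratum of the compact $\mathrm{Ad}(H)$-action on $\mathfrak{m}\backslash\{0\}$. Since $H_{tu}=H_u$ for $t>0$, this stratum is conic, and applying the slice theorem \cite{MY1957} to the action on the unit sphere shows that it is open and dense after coning back to $\mathfrak{m}\backslash\{0\}$. The decisive point is that on the principal stratum the isotropy $H_u$ acts trivially on the normal slice $\mathbf{V}_u$, forcing $\mathbf{V}_{u,0}=\mathbf{V}_u$ and $\mathbf{V}_{u,1}=0$. Combining Lemma \ref{lemma-8} with Theorem \ref{main-thm-2}, the Finsler equigeodesic hypothesis yields $[\mathfrak{m},u]_\mathfrak{m}\subset[\mathfrak{h},u]+\mathbf{V}_{u,1}$ for every $u\in\mathfrak{m}\backslash\{0\}$, which on $\mathcal{U}$ reduces to the desired inclusion $[\mathfrak{m},u]_\mathfrak{m}\subset[\mathfrak{h},u]$.

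The only delicate ingredient is the triviality of the slice representation of the principal isotropy, which I would justify by the standard slice-theorem argument: for small $v\in\mathbf{V}_u$ the stabilizer of $u+v$ equals $(H_u)_v$, which, being contained in $H_u$ and conjugate to $H_u$ inside the compact group $H$, must equal $H_u$; hence $H_u$ fixes every small $v\in\mathbf{V}_u$ and so, by linearity, all of $\mathbf{V}_u$. Everything else amounts to routine bookkeeping with the criteria already established in the paper.
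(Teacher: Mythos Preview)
Your proof is correct and follows essentially the same approach as the paper: the paper likewise takes $\mathcal{U}$ to be the principal stratum of the $\mathrm{Ad}(H)$-action, uses triviality of the slice representation there to force $\mathbf{V}_{u,1}=0$, and for the converse passes from $\mathcal{U}$ to all of $\mathfrak{m}\backslash\{0\}$ by the continuity of $u\mapsto\langle u,[v,u]_\mathfrak{m}\rangle^F_u$. Your version is slightly more explicit in justifying the triviality of the principal slice representation and in packaging the continuity step as closedness of the Finsler equigeodesic locus, but there is no substantive difference.
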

\begin{proof}
Assume $G/H$ is Finsler equigeodesic, then we can take $\mathcal{U}$ to be the union of all principal $\mathrm{Ad}(H)$-orbits in $\mathfrak{m}$. By the slice theorem for a linear group action \cite{HH1970,MY1957}, $\mathcal{U}$
is a conic open dense subset of $\mathfrak{m}$.
 For any $u\in\mathcal{U}$, $\mathrm{Ad}(H)$ acts trivially on
the $\langle\cdot,\cdot\rangle_{\mathrm{bi}}$-orthogonal complement of $T_u(\mathrm{Ad}(H)u)=[\mathfrak{h},u]$
in $\mathfrak{m}$, i.e., $\mathbf{V}_{u,1}=0$. So Theorem
\ref{main-thm-2} provides $[\mathfrak{m},u]_\mathfrak{m}
\subset[\mathfrak{h},u]$.

Assume $[\mathfrak{m},u]_\mathfrak{m}
\subset[\mathfrak{h},u]$ for each $u$ in a conic dense open subset $\mathcal{U}\subset\mathfrak{m}\backslash\{0\}$. Then for any $G$-invariant Finsler metric $F$ on $G/H$,  we have (\ref{001}), i.e., $\langle u,[v,u]_\mathfrak{m}\rangle_u^F=0$, $\forall v\in\mathfrak{m}$. By the continuity, (\ref{001}) is satisfied for all $u\in\mathfrak{m}\backslash\{0\}$. Since $F$ is arbitrarily chosen, we see $G/H$ is Finsler equigeodesic.
\end{proof}

The criterion Lemma \ref{lemma-3} reveals
an interesting phenomenon for the Finsler equigeodesic property, i.e., it is only relevant to Lie algebras. So we have the following immediate consequences.

\begin{lemma}\label{lemma-4}
A homogeneous manifold $G/H=(G_1\times G_2)/(H_1\times H_2)=G_1/H_1\times G_2/H_2$ with a compact $G$ is Finsler equigeodesic if and only if each $G_i/H_i$ is
Finsler equigeodesic.
\end{lemma}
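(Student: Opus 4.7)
The plan is to apply the criterion of Lemma \ref{lemma-3} separately to $G/H$ and to each factor $G_i/H_i$, and translate between them using the direct-sum structure. First I would fix a bi-invariant inner product on $\mathfrak{g}$ compatible with $\mathfrak{g}=\mathfrak{g}_1\oplus\mathfrak{g}_2$, so the orthogonal reductive decomposition splits as $\mathfrak{g}_i=\mathfrak{h}_i+\mathfrak{m}_i$ with $\mathfrak{m}=\mathfrak{m}_1\oplus\mathfrak{m}_2$. The key algebraic observation driving both directions is $[\mathfrak{g}_1,\mathfrak{g}_2]=0$: for any $u=u_1+u_2$ with $u_i\in\mathfrak{m}_i$ this gives
\begin{equation*}
[\mathfrak{m},u]_\mathfrak{m}=[\mathfrak{m}_1,u_1]_{\mathfrak{m}_1}\oplus[\mathfrak{m}_2,u_2]_{\mathfrak{m}_2},\qquad [\mathfrak{h},u]=[\mathfrak{h}_1,u_1]\oplus[\mathfrak{h}_2,u_2],
\end{equation*}
so the inclusion in (\ref{033}) splits automatically into its two coordinate pieces inside $\mathfrak{m}_1\oplus\mathfrak{m}_2$.

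For the forward direction, I would take a conic open dense witness set $\mathcal{U}\subset\mathfrak{m}\setminus\{0\}$ for $G/H$, and set $\mathcal{U}_i=\pi_i(\mathcal{U})\setminus\{0\}$, where $\pi_i:\mathfrak{m}\to\mathfrak{m}_i$ is the projection. A short check shows that $\mathcal{U}_i$ is conic (from conicity of $\mathcal{U}$), open (since the linear projection $\pi_i$ is an open map), and dense in $\mathfrak{m}_i\setminus\{0\}$: any open $U\subset\mathfrak{m}_i\setminus\{0\}$ lifts to the open slab $U+\mathfrak{m}_{3-i}$ which avoids $0$ and hence meets $\mathcal{U}$. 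Given any $u_i\in\mathcal{U}_i$, pick a partner $u_{3-i}$ with $u_1+u_2\in\mathcal{U}$; the splitting above lets me project the inclusion for $u_1+u_2$ onto $\mathfrak{m}_i$ and read off $[\mathfrak{m}_i,u_i]_{\mathfrak{m}_i}\subset[\mathfrak{h}_i,u_i]$. Lemma \ref{lemma-3} then yields that each $G_i/H_i$ is Finsler equigeodesic.

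For the converse, given witnesses $\mathcal{U}_i\subset\mathfrak{m}_i\setminus\{0\}$, I would simply take $\mathcal{U}=\mathcal{U}_1\times\mathcal{U}_2\subset\mathfrak{m}\setminus\{0\}$. This is manifestly conic, open as a product of opens, and dense in $\mathfrak{m}$ because $\overline{\mathcal{U}_1\times\mathcal{U}_2}=\overline{\mathcal{U}_1}\times\overline{\mathcal{U}_2}=\mathfrak{m}_1\times\mathfrak{m}_2$. The required bracket inclusion on $\mathcal{U}$ is then the direct sum of the two given inclusions, and Lemma \ref{lemma-3} concludes. I do not expect a genuine obstacle here: the proof is essentially bookkeeping with the product structure, and all of the content is carried by the vanishing bracket $[\mathfrak{g}_1,\mathfrak{g}_2]=0$ together with Lemma \ref{lemma-3}. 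The only mild subtlety is being careful about the origin in the density arguments, which is handled by using neighborhoods bounded away from $0$.
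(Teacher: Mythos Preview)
Your proposal is correct and follows essentially the same approach as the paper: both directions use Lemma~\ref{lemma-3}, the bracket splitting coming from $[\mathfrak{g}_1,\mathfrak{g}_2]=0$, and construct the witness sets via projections for the forward direction and the product $\mathcal{U}_1\times\mathcal{U}_2$ for the converse. The only cosmetic difference is that the paper takes $\mathcal{U}_i=\mathrm{pr}_i(\mathcal{U}\setminus(\mathfrak{m}_1\cup\mathfrak{m}_2))$ rather than $\pi_i(\mathcal{U})\setminus\{0\}$, which amounts to the same thing.
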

\begin{proof} The proof repeatedly uses Lemma \ref{lemma-3}.
Assume $G/H$ is Finsler equigeodesic, which provides
$\mathcal{U}\subset\mathfrak{m}\backslash\{0\}$.
Denote $\mathrm{pr}_i:\mathfrak{m}\rightarrow\mathfrak{m}_i$
the linear projection according to
$\mathfrak{m}=\mathfrak{m}_1+\mathfrak{m}_2$. Then we can take
$\mathcal{U}_i=\mathrm{pr}_i(\mathcal{U}\backslash(\mathfrak{m}_1
\cup\mathfrak{m}_2))$ for $G_i/H_i$.
Assume each $G_i/H_i$ is Finsler equigeodesic,  providing $\mathcal{U}_i\subset\mathfrak{m}_i\backslash\{0\}$. Then
we can choose
$\mathcal{U}=\mathcal{U}_1\times \mathcal{U}_2$  for $G/H$.
\end{proof}

\begin{lemma}\label{lemma-5}
Let $G/H$ be a homogeneous manifold with a compact semi simple $G$. Then the following statements are equivalent:
\begin{enumerate}
\item $G/H$ is Finsler equigeodesic;
\item $G_0/(G_0\cap H)$ is Finsler equigeodesic, in which $G_0$ is the identity component of $G$;
\item $G/H_0$ is Finsler equigeodesic, in which $H_0$ is the identity component of $H$;
\item $\widetilde{G}_0/\widetilde{H}_0$ is Finsler equigeodesic, in which $\widetilde{G}_0$ is the universal cover of $G_0$ and $\widetilde{H}_0$ is the connected subgroup in $\widetilde{G}_0$ covering $H_0$.
\end{enumerate}
\end{lemma}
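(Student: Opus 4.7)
The plan is to invoke the Lie-algebraic criterion of Lemma \ref{lemma-3} and note that all four homogeneous manifolds share a common pair of Lie algebras. Writing $\mathfrak{g}=\mathrm{Lie}(G)$ and $\mathfrak{h}=\mathrm{Lie}(H)$, one has $\mathrm{Lie}(G_0)=\mathrm{Lie}(\widetilde{G}_0)=\mathfrak{g}$ and $\mathrm{Lie}(H_0)=\mathrm{Lie}(\widetilde{H}_0)=\mathrm{Lie}(G_0\cap H)=\mathfrak{h}$ (the last identity because $G_0$ is open in $G$, so $G_0\cap H$ is an open neighborhood of the identity in $H$ and therefore shares the identity component $H_0$). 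The orthogonal reductive decomposition $\mathfrak{g}=\mathfrak{h}+\mathfrak{m}$ induced by a fixed $\mathrm{Ad}(G)$-invariant inner product $\langle\cdot,\cdot\rangle_{\mathrm{bi}}$ on $\mathfrak{g}$ therefore serves for all four quotients simultaneously.

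The first step is to verify that each of the four quotients satisfies the hypotheses under which Lemma \ref{lemma-3} is stated. The top group in each case must be compact: $G_0$ is a closed subgroup of the compact $G$; for $\widetilde{G}_0$ I would invoke the classical fact that a compact semisimple Lie group has finite fundamental group, so $\widetilde{G}_0$ is a finite cover of $G_0$ and is again compact. The fixed inner product $\langle\cdot,\cdot\rangle_{\mathrm{bi}}$ on $\mathfrak{g}$ is $\mathrm{Ad}$-invariant with respect to all four top groups at once. Almost effectiveness is the Lie-algebraic condition that no nonzero ideal of $\mathfrak{g}$ lies in $\mathfrak{h}$, so it holds for all four quotients as soon as it holds for $G/H$.

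With Lemma \ref{lemma-3} applicable in each case, being Finsler equigeodesic is equivalent to the existence of a conic open dense subset $\mathcal{U}\subset\mathfrak{m}\backslash\{0\}$ with $[\mathfrak{m},u]_\mathfrak{m}\subset[\mathfrak{h},u]$ for every $u\in\mathcal{U}$. This condition mentions only the Lie algebras $\mathfrak{g}$ and $\mathfrak{h}$ and the bracket inherited from $\mathfrak{g}$; it is blind to the connectivity of $G$ or of $H$ and to the passage to covering groups. Consequently the four statements (1)--(4) are each equivalent to this common Lie-algebraic condition and hence mutually equivalent.

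The main (and rather modest) obstacle is conceptual, not computational: the spaces of $G$-invariant Finsler metrics on $G/H$ and on, say, $G/H_0$ are genuinely different, since $\mathrm{Ad}(H_0)$-invariance is strictly weaker than $\mathrm{Ad}(H)$-invariance, so \emph{a priori} the equigeodesic property — a statement quantifying over \emph{all} invariant metrics — could be more restrictive on $G/H_0$ than on $G/H$. The resolution is precisely Lemma \ref{lemma-3}: once the property is translated into the Lie-algebraic criterion (\ref{033}), this apparent asymmetry vanishes and the four-way equivalence becomes formal.
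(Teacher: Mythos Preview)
Your proposal is correct and follows essentially the same approach as the paper: verify that each of the four quotients has compact top group so that Lemma~\ref{lemma-3} applies, then observe that the criterion~(\ref{033}) depends only on the pair $(\mathfrak{g},\mathfrak{h})$, which is common to all four. Your write-up is in fact more detailed than the paper's (you spell out why $\widetilde{G}_0$ is compact and why $\mathrm{Lie}(G_0\cap H)=\mathfrak{h}$, and you add the helpful remark about the a~priori asymmetry between the invariant-metric spaces), but the underlying argument is the same.
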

\begin{proof} Since $G$ is compact, $G_0$ and $\widetilde{G}_0$ are also compact, i.e., Lemma \ref{lemma-3} is applicable for each in (1)-(4).
The homogeneous manifolds in (1)-(4) of Lemma \ref{lemma-5} share the same $\mathfrak{g}$ and $\mathfrak{h}$, so they also share the same statement (\ref{033}) in Lemma \ref{lemma-3}.
\end{proof}

\subsection{Proof of Theorem B}

Now we prove Theorem B by the following steps, in which the details for Step 3 is postponed to Section \ref{section-5}.

{\bf Step 1}. We can use Lemma \ref{lemma-5} to replace $G/H$ by $\widetilde{G}_0/\widetilde{H}_0$, i.e., we may assume $G$ is compact, connected and simply connected, and $H$ is connected.

{\bf Step 2}. The Finsler equigeodesic space $G/H$ is also Riemannian equigeodesic, so Theorem \ref{main-thm-1} can be applied to decompose $G/H$ as
$G/H=G_1/H_1\times \cdots\times G_m/H_m$, in which each
$G_i/H_i$ is strongly isotropy irreducible. By Lemma \ref{lemma-4}, $G/H$ can be replaced by each $G_i/H_i$, i.e.,
we may further assume $G/H$ is a compact strongly isotropy irreducible space on which the compact semi simple $G$ acts almost effectively.

{\bf Step 3}. In Section \ref{section-5}, we will classify  strongly isotropy irreducible compact Finsler equigeodesic space $G/H$ in the Lie algebra level (see Theorem \ref{main-thm-4}), i.e.,
locally, $G/H$ must be one of the following
\begin{equation}\label{006}
Spin(7)/G_2,\mbox{ or } G_2/SU(3),\mbox{ or a symmetric space of compact type.}
\end{equation}

To summarize, above steps provide a local decomposition
$G/H=G_1/H_1\times\cdots\times G_m/H_m$, in which each
$G_i/H_i$ satisfies (\ref{006}).

{\bf Step 4}. We prove if a homogeneous manifold $G/H$
with a compact semi simple $G$ has a local decomposition
$G/H=G_1/H_1\times\cdots\times G_m/H_m$ in which each
$G_i/H_i$ satisfies (\ref{006}) is Finsler equigeodesic. Obviously
 $S^7=Spin(7)/G_2$ and $S^6=G_2/SU(3)$ are Finsler equigeodesic because
the invariant Finsler metric on each of them is unique up to a scalar, i.e., a Riemannian metric with positive constant curvature. In Example \ref{example-2}, we see a symmetric space of compact type is Finsler equigeodesic.
Using Lemma \ref{lemma-4} and Lemma \ref{lemma-5}, we see that $G/H$ is Finsler equigeodesic.

This ends the proof of Theorem B.


\section{Strongly isotropy irreducible compact Finsler equigeodesic space}
\label{section-5}

The goal of this section is to prove the following classification result.
\begin{theorem}\label{main-thm-4}
Let $G/H$ be a Finsler equigeodesic space on which the compact connected semi simple $G$ acts
almost effectively with a strongly irreducible isotropy representation. Then the pair $(\mathfrak{g},\mathfrak{h})$
is $(so(7),G_2)$, $(G_2,su(3))$, or a symmetric pair.
\end{theorem}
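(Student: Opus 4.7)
The plan is to combine the criterion of Lemma \ref{lemma-3} with the Manturov-Wolf-Kramer classification of compact strongly isotropy irreducible spaces. Example \ref{example-2} already shows that every symmetric pair of compact type is Finsler equigeodesic, so the content of Theorem \ref{main-thm-4} is to rule out every compact, non-symmetric, strongly isotropy irreducible pair $(\mathfrak{g},\mathfrak{h})$ other than $(so(7),G_2)$ and $(G_2,su(3))$.

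First I would extract the numerical obstructions referred to in the introduction as parts (1) and (3) of Lemma \ref{lemma-6}. Fix an $\mathrm{Ad}(G)$-invariant inner product, the orthogonal reductive decomposition $\mathfrak{g}=\mathfrak{h}+\mathfrak{m}$, and a principal point $u\in\mathfrak{m}$ for the $\mathrm{Ad}(H)$-action. Then $\mathbf{V}_{u,1}=0$, so Lemma \ref{lemma-3} forces $[\mathfrak{m},u]_\mathfrak{m}\subset[\mathfrak{h},u]$. The bi-invariance identity $\langle [a,u]_\mathfrak{h},b\rangle_{\mathrm{bi}}=-\langle a,[b,u]\rangle_{\mathrm{bi}}$ for $a\in\mathfrak{m}$, $b\in\mathfrak{h}$ gives $\dim[\mathfrak{m},u]_\mathfrak{h}=\dim[\mathfrak{h},u]=\dim\mathfrak{h}-\dim\mathfrak{c}_\mathfrak{h}(u)$, and comparing with $\dim\mathfrak{m}=\dim\mathfrak{c}_\mathfrak{m}(u)+\dim[\mathfrak{m},u]_\mathfrak{h}+\dim[\mathfrak{m},u]_\mathfrak{m}$ yields
\begin{equation*}
\dim\mathfrak{m}-\dim\mathfrak{c}_\mathfrak{m}(u)\leq 2(\dim\mathfrak{h}-\dim\mathfrak{c}_\mathfrak{h}(u)).
\end{equation*}
Specializing to a $u\in\mathfrak{m}$ that is regular in $\mathfrak{g}$, so that $\dim\mathfrak{c}_\mathfrak{m}(u)\leq\mathrm{rk}\,\mathfrak{g}$, one obtains a lower bound of the shape $\dim\mathfrak{h}\geq\tfrac13(\dim\mathfrak{g}-\mathrm{rk}\,\mathfrak{g})$, while reading the same inequality the other way gives a lower bound on $\dim\mathfrak{c}_\mathfrak{g}(u)$ for every generic $u\in\mathfrak{m}$.

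Second, I would push these two bounds through the list of compact non-symmetric strongly isotropy irreducible pairs of \cite{Kr1975,Ma1961-1,Ma1961-2,Ma1966,Wo1968}. For most entries, including the infinite families $(so(d),\mathfrak{h})$ in which $\mathfrak{h}$ acts on $\mathbb{R}^d$ through a small irreducible representation of a simple Lie algebra, $\dim\mathfrak{h}$ is far below $\dim\mathfrak{g}/3$ and the dimension bound is already violated. The few candidates that clear this crude sieve are sporadic exceptional pairs with relatively large $\mathfrak{h}$; for each of them I would choose an explicit $u$ from weight data and show that its centralizer $\mathfrak{c}_\mathfrak{g}(u)$ is strictly smaller than the centralizer bound demands, or directly exhibit a bracket $[v,u]_\mathfrak{m}$ that fails to sit in $[\mathfrak{h},u]$. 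The two survivors $(so(7),G_2)$ and $(G_2,su(3))$ both satisfy the criterion automatically because in each case $H$ acts transitively on the unit sphere of $\mathfrak{m}$, so $\mathbf{V}_{u,0}=0$ for every $u\in\mathfrak{m}\setminus\{0\}$ and Theorem \ref{main-thm-2} is trivially met.

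The main obstacle is the case-by-case elimination in the second step: one must verify uniformly across Kramer's list that every non-symmetric entry other than $(so(7),G_2)$ and $(G_2,su(3))$ violates either the dimension bound or the centralizer bound, and this becomes delicate precisely for those sporadic exceptional pairs where both numerical inequalities are nearly tight. If some entry survives both tests, the fallback is to apply Lemma \ref{lemma-3} at a concrete non-principal $u$, use the slice theorem to identify a nonzero $\mathbf{V}_{u,0}$, and exhibit a direction in $[\mathfrak{m},u]_\mathfrak{m}$ that is not contained in $[\mathfrak{h},u]+\mathbf{V}_{u,1}$, directly contradicting Theorem \ref{main-thm-2}.
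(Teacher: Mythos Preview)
Your overall strategy is exactly the paper's: derive a dimension inequality of the form $2\dim\mathfrak{h}+\mathrm{rk}\,\mathfrak{g}\geq\dim\mathfrak{m}$ from Lemma~\ref{lemma-3}, run it against the Manturov--Wolf--Kr\"amer list, and then kill the survivors by computing centralizers of explicit vectors. Two points, however, need to be flagged.

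First, your derivation of the dimension bound has a genuine gap. You obtain $\dim\mathfrak{m}-\dim\mathfrak{c}_\mathfrak{m}(u)\leq 2(\dim\mathfrak{h}-\dim\mathfrak{c}_\mathfrak{h}(u))$ for generic $u\in\mathcal{U}$ (your displayed equality should in fact be ``$\leq$'', but that is harmless), and then specialize to a $u\in\mathfrak{m}$ that is \emph{regular in $\mathfrak{g}$}. Nothing guarantees that $\mathfrak{m}$ contains a $\mathfrak{g}$-regular vector; for many homogeneous spaces it does not. The paper avoids this by proving an intermediate statement (part~(2) of Lemma~\ref{lemma-6}): for a $u\in\mathfrak{m}$ maximizing $\dim\mathfrak{c}(\mathfrak{c}_\mathfrak{g}(u))$, the orbit-type stratum of $\mathrm{Ad}(G)u$ contains an open neighborhood of $u$ in $\mathfrak{m}$, whence $\dim\mathrm{Ad}(G)u+\dim\mathfrak{c}(\mathfrak{c}_\mathfrak{g}(u))\geq\dim\mathfrak{m}$. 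Combining this with part~(1) yields $2\dim\mathfrak{h}+\mathrm{rk}\,\mathfrak{g}>\dim\mathfrak{m}$ without ever assuming regularity. This orbit-stratification/flag-manifold argument is the missing idea.

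Second, the residual case analysis is more than a tidy-up. After the dimension filter, nine pairs survive (the paper's Table~\ref{table-2}), and only some of them fall to the centralizer bound (1) via an explicit $u\in\mathfrak{t}\cap\mathfrak{m}$. Several---$(F_4,su(2)\oplus G_2)$, $(F_4,su(3)\oplus su(3))$, $(E_7,su(3)\oplus su(6))$, $(E_8,su(3)\oplus E_6)$---require the finer flag-manifold criterion (2) of Lemma~\ref{lemma-6}, bounding $\dim\mathfrak{c}(\mathfrak{c}_\mathfrak{g}(u))$ via the classification of centralizers and then deriving a contradiction with $\dim\mathfrak{m}$. The pair $(E_7,su(2)\oplus F_4)$ is particularly delicate: the paper needs an explicit root-system description (relegated to an appendix) to pin down $\mathfrak{c}_\mathfrak{g}(u')=so(8)\oplus\mathbb{R}^3$ for generic $u'\in\mathfrak{t}\cap\mathfrak{m}$, together with the observation that $\mathrm{Ad}(H)$ acts with trivial principal isotropy on $\mathfrak{m}$, before (\ref{035}) yields a contradiction. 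Your sketch does not anticipate the need for the quantity $\dim\mathfrak{c}(\mathfrak{c}_\mathfrak{g}(u))$, and without it the nine survivors cannot all be eliminated.
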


To prove this theorem, we need two preparations. Firstly, the numerical properties of a Finsler equigeodesic space in the following lemma are crucial for later case-by-case discussion.

\begin{lemma}\label{lemma-6}
Let $G/H$ be a homogeneous manifold with a compact connected semi simple $(G,
\langle\cdot,\cdot\rangle_{\mathrm{bi}})$ and the corresponding orthogonal reductive decomposition $\mathfrak{g}=\mathfrak{h}+\mathfrak{m}$. Suppose  that $G/H$ is Finsler equigeodesic, then we have the following:
\begin{enumerate}
\item $ \dim \mathfrak{c}_{\mathfrak{g}}(u)\geq\dim\mathfrak{g}-
    2\dim\mathfrak{h}$ for any
each $u\in\mathfrak{m}$;
\item $\dim \mathrm{Ad}(G)u+\dim
\mathfrak{c}(\mathfrak{c}_{\mathfrak{g}}(u))\geq
\dim\mathfrak{m}$,
in which $u\in\mathfrak{m}$ satisfies $$\dim\mathfrak{c}(\mathfrak{c}_{\mathfrak{g}}(u))=
\max_{v\in\mathfrak{m}}
\mathfrak{c}(\mathfrak{c}_\mathfrak{g}(v));$$
\item $ 2\dim \mathfrak{h}+ \mathrm{rk}\mathfrak{g}
   >\dim\mathfrak{m}$.
    \end{enumerate}
\end{lemma}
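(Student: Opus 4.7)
The plan is to establish the three inequalities in sequence, each drawing further consequences from the criterion $[\mathfrak{m},u]_\mathfrak{m}\subset[\mathfrak{h},u]$ of Lemma \ref{lemma-3}, which holds on some conic dense open $\mathcal{U}\subset\mathfrak{m}\setminus\{0\}$.

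For (1), on $\mathcal{U}$ I would split $[\mathfrak{g},u]=[\mathfrak{h},u]+[\mathfrak{m},u]_\mathfrak{h}+[\mathfrak{m},u]_\mathfrak{m}$; the criterion collapses this to $[\mathfrak{h},u]+[\mathfrak{m},u]_\mathfrak{h}$, with $[\mathfrak{h},u]\subset\mathfrak{m}$ and $[\mathfrak{m},u]_\mathfrak{h}\subset\mathfrak{h}$ each of dimension at most $\dim\mathfrak{h}$. Hence $\dim\mathfrak{c}_\mathfrak{g}(u)\geq\dim\mathfrak{g}-2\dim\mathfrak{h}$ on $\mathcal{U}$, and upper semicontinuity of $\dim\ker\mathrm{ad}(\cdot)$ extends the bound to all of $\mathfrak{m}$.

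For (2), I would first derive the identity $\mathrm{pr}_\mathfrak{m}(\mathfrak{c}_\mathfrak{g}(u))=\mathbf{V}_u$ on $\mathcal{U}$: the inclusion $\subset$ follows by splitting $[x,u]=0$ into $\mathfrak{h}$- and $\mathfrak{m}$-components and using $\mathrm{Ad}$-invariance; the inclusion $\supset$ uses the criterion to solve $[w,u]=[\xi,u]$ so that $w-\xi\in\mathfrak{c}_\mathfrak{g}(u)$ projects to $w$. This yields the clean identity $\dim\mathrm{Ad}(G)u=2\dim\mathrm{Ad}(H)u$ on $\mathcal{U}$. For principal $u\in\mathcal{U}$ (open dense), $\mathbf{V}_u=\mathbf{V}_{u,0}=\mathfrak{m}^{H_u}$; then $\mathrm{Ad}(H_u)$-equivariance of $[v,u]$, combined with $[\mathfrak{h},u]^{H_u}=[\mathfrak{h},u]\cap\mathbf{V}_{u,0}=0$, forces $\mathbf{V}_{u,0}\subset\mathfrak{c}_\mathfrak{g}(u)$ and hence the decomposition $\mathfrak{c}_\mathfrak{g}(u)=\mathfrak{h}_u\oplus\mathbf{V}_{u,0}$. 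The crucial observation is that for any $v\in\mathbf{V}_{u,0}$ and generic small $t$ one has $H_{u+tv}=H_u$, so $\mathbf{V}_{u+tv,0}=\mathfrak{m}^{H_u}=\mathbf{V}_{u,0}$, and the same decomposition at $u+tv$ forces $\mathfrak{c}_\mathfrak{g}(u+tv)=\mathfrak{c}_\mathfrak{g}(u)$. Therefore every $w\in\mathbf{V}_{u,0}$ commutes with $u+tv$, giving $[w,u]+t[w,v]=0$ for a Zariski open set of $t$ and forcing $[w,v]=0$. Thus $\mathbf{V}_{u,0}$ is abelian and centralizes $\mathfrak{h}_u$, so $\mathbf{V}_{u,0}\subset\mathfrak{c}(\mathfrak{c}_\mathfrak{g}(u))$ and $z(u):=\dim\mathfrak{c}(\mathfrak{c}_\mathfrak{g}(u))\geq\dim\mathbf{V}_u$. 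Applying this at the $z$-maximizer $u$, which may be chosen inside $\mathcal{U}\cap(\text{principal})$, and combining with the identity and $\dim\mathbf{V}_u=\dim\mathfrak{m}-\dim\mathrm{Ad}(H)u$ gives
\begin{equation*}
\dim\mathrm{Ad}(G)u+z(u)\geq 2\dim\mathrm{Ad}(H)u+(\dim\mathfrak{m}-\dim\mathrm{Ad}(H)u)\geq\dim\mathfrak{m}.
\end{equation*}

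For (3), combine (1) and (2). Part (1) gives $\dim\mathrm{Ad}(G)u\leq 2\dim\mathfrak{h}$ for every $u\in\mathfrak{m}$, and since $\mathfrak{c}(\mathfrak{c}_\mathfrak{g}(u))$ sits inside a Cartan subalgebra of $\mathfrak{g}$ containing $\mathfrak{c}_\mathfrak{g}(u)$, $z(u)\leq\mathrm{rk}\,\mathfrak{g}$. Substituting into (2) yields $2\dim\mathfrak{h}+\mathrm{rk}\,\mathfrak{g}\geq\dim\mathfrak{m}$. For strict inequality, split on the maximizer: if $z(u)=\mathrm{rk}\,\mathfrak{g}$ then $\mathfrak{c}_\mathfrak{g}(u)$ is itself a Cartan and (1) gives $2\dim\mathfrak{h}+\mathrm{rk}\,\mathfrak{g}\geq\dim\mathfrak{g}=\dim\mathfrak{m}+\dim\mathfrak{h}>\dim\mathfrak{m}$ (using $\mathfrak{h}\neq 0$, since $G/H=G$ with semisimple $G$ fails to be Finsler equigeodesic as in the example at the end of Section \ref{section-4-3}); otherwise $z(u)\leq\mathrm{rk}\,\mathfrak{g}-1$ and (2) gives the strict gain directly. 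The subtle step throughout is the slice identification $\mathbf{V}_{u+tv,0}=\mathbf{V}_{u,0}$ inside step (2), which is what enables the commutation argument pinning down $\mathbf{V}_{u,0}\subset\mathfrak{c}(\mathfrak{c}_\mathfrak{g}(u))$.
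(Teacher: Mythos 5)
Your proof of part (1) is essentially the paper's argument repackaged: bounding $\dim[\mathfrak{g},u]$ by $\dim[\mathfrak{h},u]+\dim[\mathfrak{m},u]_\mathfrak{h}\leq 2\dim\mathfrak{h}$ via the criterion $[\mathfrak{m},u]_\mathfrak{m}\subset[\mathfrak{h},u]$ on $\mathcal{U}$, then extending by semicontinuity, is the same dimension count the paper carries out via the identity $\dim\mathfrak{c}_\mathfrak{g}(u)=\dim\mathfrak{c}_\mathfrak{h}(u)+\dim\ker l$ (your $\mathbf{V}_u$ equals their $\ker l$ by $\mathrm{Ad}$-invariance of $\langle\cdot,\cdot\rangle_{\mathrm{bi}}$). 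Part (3) is also the same strategy: feed (1) and (2) into $\dim\mathfrak{c}(\mathfrak{c}_\mathfrak{g}(u))\leq\mathrm{rk}\,\mathfrak{g}$ and rule out equality by observing it forces $\mathfrak{h}=0$; your case split on $z(u)=\mathrm{rk}\,\mathfrak{g}$ versus $z(u)\leq\mathrm{rk}\,\mathfrak{g}-1$ is just a reorganization.

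Part (2), however, takes a genuinely different route. The paper does not use the Finsler equigeodesic hypothesis here at all: it picks the maximizer $u$, invokes the orbit type stratification $\mathfrak{g}=S_1\sqcup\cdots\sqcup S_N$ for $\mathrm{Ad}(G)$, observes that $\dim S_1=\dim\mathrm{Ad}(G)u+\dim\mathfrak{c}(\mathfrak{c}_\mathfrak{g}(u))$ (the local section $S_1\cap\mathfrak{t}$ has dimension $\dim\mathfrak{c}(\mathfrak{c}_\mathfrak{g}(u))$), and uses maximality to conclude that a neighborhood of $u$ in $\mathfrak{m}$ lies inside $S_1$, hence $\dim S_1\geq\dim\mathfrak{m}$; this is a purely Lie-theoretic statement about an arbitrary subspace $\mathfrak{m}\subset\mathfrak{g}$. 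You instead lean heavily on the Finsler hypothesis: you derive the splitting $\mathfrak{c}_\mathfrak{g}(u)=\mathfrak{c}_\mathfrak{h}(u)\oplus\mathbf{V}_{u,0}$ for principal $u\in\mathcal{U}$, and then the deformation $u\mapsto u+tv$ (using $H_{u+tv}=H_u$ for small $t$) to conclude $\mathbf{V}_{u,0}\subset\mathfrak{c}(\mathfrak{c}_\mathfrak{g}(u))$, together with $\dim\mathrm{Ad}(G)u=2\dim\mathrm{Ad}(H)u$. This is correct and in fact proves slightly more, but the one step you gloss over is the assertion that a $z$-maximizer can be chosen in $\mathcal{U}\cap(\text{principal})$. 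That claim is true but deserves a sentence: since $z$ is constant on each $\mathrm{Ad}(G)$-orbit type stratum and jumps \emph{up} off a stratum (as the paper notes in its own proof), $\{z\geq c\}$ is open; hence $\{v\in\mathfrak{m}:z(v)=z_{\max}\}$ is open in $\mathfrak{m}$, and its complement $\{z\leq z_{\max}-1\}\cap\mathfrak{m}$ is a proper closed (in fact semi-algebraic) subset, so the maximizing set is also dense. With that filled in, your argument for (2) is a valid alternative; the trade-off is that the paper's version of (2) is cleaner in that it applies to any $\mathfrak{m}$ and keeps the Finsler hypothesis confined to (1).
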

\begin{proof} (1)
Lemma \ref{lemma-3} provides a conic dense open subset
$\mathcal{U}\subset\mathfrak{m}\backslash\{0\}$. Let $u$ be any vector in $\mathcal{U}$, then $[\mathfrak{m},u]_{\mathfrak{m}}\subset[\mathfrak{h},u]$.

On one hand, we claim that the image of $\mathrm{pr}_\mathfrak{m}|_{\mathfrak{c}_{\mathfrak{g}}(u)}:
\mathfrak{c}_{\mathfrak{g}}(u)\rightarrow\mathfrak{m}$ is the kernel of the linear map $l(\cdot)=[\cdot,u]_{\mathfrak{h}}|_{\mathfrak{m}}:\mathfrak{m}
\rightarrow\mathfrak{h}$.

For any $w'\in\mathfrak{h}$ and $w\in\mathfrak{m}$, we have
$$[w'+w,u]=[w',u]+[w,u]_\mathfrak{m}+[w,u]_\mathfrak{h},$$
where the first two summands in the right side is contained in $\mathfrak{m}$ and the third is contained in $\mathfrak{h}$.
So $[w'+w,u]=0$ implies $[w,u]_\mathfrak{h}=0$, i.e., $w\in\ker l$.

For any $w\in\ker l$, the property of $u$ provides a vector $w'\in\mathfrak{h}$, such that $[w,u]_\mathfrak{m}=-[w',u]$. Then $v=w'+w$ satisfies
$$[v,u]=[w',u]+[w,u]_\mathfrak{h}+[w,u]_\mathfrak{m}
=-[w,u]_\mathfrak{m}+[w,u]_\mathfrak{m}=0,$$
i.e., there exists a vector $v\in\mathfrak{c}_\mathfrak{g}(u)$ with $v_\mathfrak{m} =w$. This ends the proof of our claim.

On the other hand, we see the obvious fact that the kernel of $\mathrm{pr}_\mathfrak{m}|_{\mathfrak{c}_{\mathfrak{g}}(u)}$
is $\mathfrak{c}_\mathfrak{h}(u)$.

Summarize above two observations, we get
\begin{equation}\label{035}
\dim\mathfrak{c}_\mathfrak{g}(u)=\dim \mathfrak{c}_\mathfrak{h}(u)+\dim \ker l,
\end{equation}
which implies
$$\dim\mathfrak{c}_{\mathfrak{g}}(u)\geq
\dim \ker l\geq\dim\mathfrak{m}-\dim\mathfrak{h}
=\dim\mathfrak{g}-2\dim\mathfrak{h}.$$
This proves (1) for $u\in\mathcal{U}$.

Notice that $\dim \mathfrak{c}_{\mathfrak{g}}(u)$ depends semi continuously on $u\in\mathfrak{m}$, i.e.,
$$\dim \mathfrak{c}_{\mathfrak{g}}(\lim_{n\rightarrow\infty} u_n)\geq
\overline{\lim}_{n\rightarrow\infty}
\dim\mathfrak{c}_{\mathfrak{g}}
(u_n).$$
This semi continuity implies that (1) is valid on $\overline{\mathcal{U}}=\mathfrak{m}$.

(2) Suppose $\max_{v\in\mathfrak{m}}
\dim\mathfrak{c}(\mathfrak{c}_\mathfrak{g}(v))$ is achieved at $u\in\mathfrak{m}$. Denote by $\mathfrak{t}$ any Cartan subalgebra containing $u$, and by $\mathfrak{g}=S_1\coprod\cdots\coprod S_N$ the orbit type stratification for the $\mathrm{Ad}(G)$-action.
We assume $S_1$ contains $u$. Then the quotient map
$\pi:S_1\rightarrow S_1/G$ is a smooth fiber bundle satisfying:
\begin{enumerate}
\item
each fiber is an $\mathrm{Ad}(G)$-orbit with same orbit type as $\mathrm{Ad}(G)u$;
\item
Locally around $u$, $S_1\cap\mathfrak{t}$ is section for
this fiber bundle and it is a linear subspace of dimension
$\dim\mathfrak{c}(\mathfrak{c}_\mathfrak{g}(u))$, which is the intersection of some Weyl walls.
\end{enumerate}

On the other hand, by the classification of flag manifolds (see \cite{Al1997} or Section \ref{section-2-5}), for any $w\in\mathfrak{g}\backslash S_1$
which is sufficiently close to $u$, we have $\dim\mathfrak{c}
(\mathfrak{c}_{\mathfrak{g}}(w))>\dim\mathfrak{c}(
\mathfrak{c}_\mathfrak{g}(u))$. So
the assumption
$\dim\mathfrak{c}(\mathfrak{c}_\mathfrak{g}(u))=
\max_{v\in\mathfrak{m}}\dim\mathfrak{c}
(\mathfrak{c}_\mathfrak{g}(v))$ implies that there exists a neighborhood of $u$ in $\mathfrak{m}$ which is contained in $S_1$.

To summarize,
we have
$$\dim S_1=\dim\mathrm{Ad}(G)u+
\dim\mathfrak{c}(\mathfrak{c}_\mathfrak{g}(u))\geq\dim\mathfrak{m},$$
which proves (2).


(3) For the vector $u\in\mathfrak{m}$ provided by (2), we have
\begin{equation}\label{010}
\dim\mathfrak{c}(\mathfrak{c}_\mathfrak{g}(u))\leq\dim\mathfrak{t}
=\mathrm{rk}\mathfrak{g}
\end{equation}
which is obvious, and
\begin{equation}\label{009}
\dim(\mathrm{Ad}(g)u)=\dim\mathfrak{g}-\dim
\mathfrak{c}_{\mathfrak{g}}(u)\leq 2\dim\mathfrak{h}
\end{equation}
by (1) of Lemma \ref{lemma-6}.
Input (\ref{010}) and (\ref{009}) into the equality in (2) of Lemma \ref{lemma-6}, we get
\begin{equation}\label{008}
2\dim\mathfrak{h}+\mathrm{rk}\mathfrak{g}\geq\dim\mathfrak{m}.
\end{equation}
%
%
So, to prove (3), we only need to verify that the equality in (\ref{008}) can not happen.

Assume conversely
$
2\dim \mathfrak{h}+ \mathrm{rk}\mathfrak{g}
   =\dim\mathfrak{m}
$.
Then $u$ is a regular vector, i.e.,
\begin{equation}\label{013}
\dim\mathfrak{c}_\mathfrak{g}(u)=\dim\mathfrak{c}
(\mathfrak{c}_\mathfrak{g}(u))=
\mathrm{rk}\mathfrak{g}.
\end{equation}
On the other hand,
(1) of Lemma \ref{lemma-6} indicates
\begin{equation}\label{014}
\dim\mathfrak{c}_\mathfrak{g}(u)
\geq\dim\mathfrak{g}-2\dim\mathfrak{h}=\dim\mathfrak{h}
+\mathrm{rk}\mathfrak{g}.
\end{equation}
Compare (\ref{013}) and (\ref{014}), we get $\dim\mathfrak{h}=0$ and $\mathrm{rk}\mathfrak{g}=
\dim\mathfrak{g}$, i.e., $\mathfrak{g}$ is Abelian.
This is a contradiction.
\end{proof}

Secondly, we need the classification list in \cite{Wo1968}, for a nonsymmetric strongly isotropy irreducible $G/H$ on which a compact connected $G$
acts almost effectively. We list their Lie algebra pairs in Table
\ref{table-1}.

\begin{table}
  \centering
  \begin{tabular}{|c|c|c|c|c|c|}
    \hline
   $\mathfrak{g}$ & $\dim\mathfrak{g}$ & $\mathfrak{h}$&  $\dim\mathfrak{h}$ & Conditions\\ \hline \hline
    $su(pq)$ & $p^2q^2-1$ &$su(p)\oplus su(q)$
       & $p^2+q^2-2$& $p\geq q\geq 2,pq>4$ \\ \hline
    $su(16)$ & 255 & $so(10)$ & 45 & \\ \hline
     $su(27)$ & 728 &$E_6$ & 78& \\ \hline
     $su(\tfrac{n(n-1)}{2})$ &$\tfrac{n^2(n-1)^2}{4}-1 $ &
       $su({n})$ &$n^2-1$ &$n\geq5$\\ \hline
    $su(\tfrac{n(n+1)}{2})$ & $\tfrac{n^2(n+1)^2}{4}-1 $
      & $su(n)$ & $n^2-1$ & $n\geq 3$\\ \hline
     $sp(2)$ & 10 & $so(3)$ & 3 & \\ \hline
     $sp(7)$ & 105 & $sp(3)$ & 21 & \\ \hline
     $sp(10)$ & 210 & $su(6)$ & 35 &`\\ \hline
     $sp(16)$ & 528 & $so(12)$ & 66 & \\ \hline
    $sp(28)$ & 1596&$E_7$ & 133 & \\ \hline
     $so(20)$ & 190 & $su(4)$ & 15 & \\ \hline
     $so(70)$ & 2415 & $su(8)$ & 63 & \\ \hline
     $so(n^2-1)$ & $\tfrac{(n^2-1)(n^2-2)}{2}$ & $su(n)$ & $n^2-1$ &$n\geq 3$\\ \hline
     $so(16)$ & 120 &$so(9)$ & 36 &\\ \hline
     $so(2n^2+n)$ &$\tfrac{(2n^2+n)(2n^2+n-1)}{2}$ &
    $so(2n+1)$ & $n(2n+1)$ & $n\geq 2$\\ \hline
     $so(2n^2+3n)$ &
    $\tfrac{(2n^2+3n)(2n^2+3n-1)} {2}$&$so(2n+1)$ &$n(2n+1)$& $n\geq2$\\ \hline
     $so(42)$ &861 & $sp(4)$ & 36 &\\ \hline
     $so(2n^2-n-1)$ & $\tfrac{(2n^2-n-1)(2n^2-n-2)}{2}$
    &$sp(n)$ & $2n^2+n$ & $n\geq3$\\ \hline
     $so(2n^2+n)$ & $\tfrac{(2n^2+n)(2n^2+n-1)}{2}$
    & $sp(n)$ & $2n^2+n$ & $n\geq3$\\ \hline
     $so(128)$ & 8128 & $so(16)$ & 120 &\\ \hline
     $so(2n^2-n)$ & $\tfrac{(2n^2-n)(2n^2-n-1)}{2}$ & $so(2n)$ & $n(2n-1)$ & $n\geq 4$\\ \hline
     $so(2n^2+n-1)$ & $\frac{(2n^2+n-1)(2n^2+n-2)}{2}$& $so(2n)$& $n(2n-1)$ &$n\geq4$\\ \hline
     $so(7)$ & 21 & $G_2$ & 14 &\\ \hline
     $so(14)$ & 91 & $G_2$ & 14 &\\ \hline
     $so(26)$ & 325 & $F_4$ & 52 &\\ \hline
     $so(52)$ & 1326 & $F_4$ & 52 &\\ \hline
     $so(78)$ & 3003 & $E_6$ & 78 &\\ \hline
     $so(133)$ & 8778 & $E_7$ & 133 &\\ \hline
     $so(248)$ & 30628 & $E_8$ & 248 &\\ \hline
     $G_2$ & 14 & $so(3)$ & 3 &\\ \hline
     $G_2$ & 14 & $su(3)$ & 8 &\\ \hline
     $F_4$ & 52 & $so(3)\oplus G_2$ & 17 & \\ \hline
     $F_4$ & 52 & $su(3)\oplus su(3)$ & 18 &\\ \hline
     $E_6$ & 78 & $su(3)$ & 8 &\\ \hline
     $E_6$ & 78 & $G_2$ & 14 &\\ \hline
     $E_6$ & 78 & $ su(3)\oplus G_2$ & 22 &\\ \hline
     $E_6$ & 78 & $su(3)\oplus su(3)\oplus su(3)$
    & 24 &\\ \hline
     $E_7$ & 133 & $su(3)$ & 8 &\\ \hline
     $E_7$ & 133 & $sp(3)\oplus G_2$ & 35 &\\ \hline
     $E_7$ & 133 & $su(2)\oplus F_4$ & 55 &\\ \hline
     $E_7$ & 133 & $su(3)\oplus su(6)$ & 43 &\\ \hline
     $E_8$ & 248 & $G_2\oplus F_4$ & 66 & \\ \hline
     $E_8$ & 248 & $su(9)$ & 80 &\\ \hline
     $E_8$ & 248 & $su(3)\oplus E_6$ & 86 & \\ \hline
     $sp(n)$ & $2n^2+n$ & $sp(1)\oplus so(n)$ &
    $\tfrac{n(n-1)}{2}+3$ & $n\geq 3$\\ \hline
     $so(4n)$ & $2n(4n-1)$ & $sp(1)\oplus sp(n)$  & $2n^2+n+3$ & $n\geq 3$\\
    \hline
  \end{tabular}
  \caption{Lie algebra pairs for compact non-symmetric strongly isotropy irreducible spaces}\label{table-1}
\end{table}

\begin{remark}
The pair $(so(4n), sp(1)\oplus sp(n))$ in Table \ref{table-1} is a symmetric pair when $n=2$. Using a graph automorphism for $so(8)$, we can change it to the standard symmetric pair $(so(8), so(3)\oplus so(5))$ for a real Grassmannian.
\end{remark}

\begin{proof}[Proof of Theorem \ref{main-thm-4}]
Let $G/H$ be a nonsymmetric strongly isotropy irreducible
homogeneous manifold on which the compact connected semi simple $G$ acts almost effectively. Then $(\mathfrak{g},\mathfrak{h})$ is listed in
Table \ref{table-1}.

We check each case in Table \ref{table-1} and see that (3) of Lemma \ref{lemma-6} is only satisfied when $(\mathfrak{g},\mathfrak{h})=(so(7),G_2)$, $(G_2,su(3))$
or in Table \ref{table-2}.

\begin{table}
  \centering
  \begin{tabular}{|c|c|c|c|c|}
    \hline
   No. & $\mathfrak{g}$ & $\dim\mathfrak{g}$&  $\mathfrak{h}$&  $\dim\mathfrak{h}$ \\ \hline
    1 & $su(6)$ &  35 & $su(2)\oplus su(3)$  & 11 \\ \hline
    2 & $sp(2)$ & 10 & $so(3)$& 3 \\ \hline
    3 & $F_4$ & 52 & $su(2)\oplus G_2$ & 17 \\ \hline
    4 & $F_4$ & 52 & $su(3)\oplus su(3)$ & 18 \\ \hline
    5 & $E_7$ & 133 & $so(3)\oplus F_4$ & 55 \\ \hline
    6 & $E_7$ & 133 & $su(3)\oplus su(6)$ & 43 \\ \hline
    7 & $E_8$ & 248 & $su(3)\oplus E_6$ & 86 \\ \hline
    8 & $so(12)$ & 66 & $sp(1)\oplus sp(3)$ & 24 \\ \hline
    9 & $so(16)$ & 120 & $sp(1)\oplus sp(4)$ & 39 \\ \hline
    \hline
  \end{tabular}
  \caption{Lie algebra pairs in Table \ref{table-1} which  satisfy (3) of Lemma \ref{lemma-6}}\label{table-2}
\end{table}

So, to prove Theorem \ref{main-thm-4}, we only need to
assume conversely that $G/H$ is each one in Table \ref{table-2}, and check case by case for contradictions.
In the upcoming case-by-case discussion,  we apply the following conventions in \cite{XW2016}. We choose a Cartan subalgebra $\mathfrak{t}$ of $\mathfrak{g}$ such that $\mathfrak{t}\cap\mathfrak{h}$ is a Cartan subalgebra of $\mathfrak{h}$. Using $\langle\cdot,\cdot\rangle_{\mathrm{bi}}$, the root systems $\Delta_\mathfrak{g}$ for $ \mathfrak{g} $ and $\Delta_\mathfrak{h}$ for $ \mathfrak{h} $
are viewed as subsets in $\mathfrak{t}$ and $\mathfrak{t}\cap\mathfrak{h}$ respectively.
The inner product $\langle\cdot,\cdot\rangle_{\mathrm{bi}}$ and the orthonormal basis  $\{e_1,\cdots,e_n\}$ are suitably chosen such that $\Delta_\mathfrak{g}$  and $\Delta_\mathfrak{h}$
can be canonically presented. All roots and all root planes are with respect to $\mathfrak{t}$ or its intersection with the specified subalgebras.

{\bf Case 1}: $(\mathfrak{g},\mathfrak{h})=
(su(6),su(2)\oplus su(3))$.

In this case, the root system $\Delta_\mathfrak{g}=\{\pm(e_i-e_j),\forall 1\leq i<j\leq 6\}$, and $\mathfrak{t}\cap\mathfrak{h}$ is spanned by
$e_1+e_3+e_5-e_2-e_4-e_6$ from the $su(2)$-summand, and
$e_1+e_2-e_3-e_4$, $e_3+e_4-e_5-e_6$ from the $su(3)$-summand. So $\mathfrak{t}\cap\mathfrak{m}$ consists of all vectors of the form $ae_1-ae_2+be_3-be_4+ce_5-ce_6$,  $\forall a,b,c\in\mathbb{R}$ with $a+b+c=0$.
A generic vector in $\mathfrak{t}\cap\mathfrak{m}$, for example, $u=e_1-e_2+2e_3-2e_4-3e_5+3e_6$,
is a regular vector in $\mathfrak{g}$. Then $\dim\mathfrak{c}_\mathfrak{g}(u)=5$ does not satisfy (1) of Lemma \ref{lemma-6}, which is a contradiction.

{\bf Case 2}: $(\mathfrak{g},\mathfrak{h})=(sp(2),so(3))$.

This $G/H$ is in fact the Berger space $Sp(2)/SU(2)$ in the classification for positively curved homogeneous manifolds
\cite{Be1961,DX2017}.
We have the root system $\Delta_\mathfrak{g}=\{\pm e_1,\pm e_2,\pm e_1\pm e_2\}$, and $\mathfrak{t}\cap\mathfrak{m}$ is spanned by
$u=(e_1+e_2)-(-e_1)=2e_1+e_2$. This $u$ is a regular vector in $\mathfrak{g}$. So $\dim\mathfrak{c}_\mathfrak{g}(u)=2$ does not satisfy (1) of Lemma \ref{lemma-6}, which is a contradiction.

{\bf Case 3 or 4}: $(\mathfrak{g},\mathfrak{h})
=(F_4, su(2)\oplus G_2)$ or $(F_4, su(3)\oplus su(3))$.

By (1) of Lemma \ref{lemma-6}, $\dim\mathfrak{c}_\mathfrak{g}(u)\geq 18$ or $16$. By the classification for flag manifolds (see \cite{Al1997} or Section \ref{section-2-5}, same below), we see $\dim\mathfrak{c}(\mathfrak{c}_\mathfrak{g}(u))=1$ for any $u\in\mathfrak{m}\backslash\{0\}$, with $\mathrm{Ad}(G)u=
F_4/Spin(7)U(1)$ or $\mathrm{Ad}(G)u=F_4/Sp(3)U(1)$.
Then by (2) of Lemma \ref{lemma-6},
$$31=\dim\mathrm{Ad}(G)u+\dim\mathfrak{c}
(\mathfrak{c}_\mathfrak{g}(u))\geq\dim\mathfrak{m}=35 \mbox{ or } 34$$
provides a contradiction.

{\bf Case 5}: $(\mathfrak{g},\mathfrak{h})=(E_7, F_4\oplus su(2))$.

Let $u'$ be a generic vector in $\mathfrak{t}\cap\mathfrak{m}$. Then $\mathfrak{c}_{\mathfrak{g}}(u')=so(8)\oplus\mathbb{R}^3$.
This observation needs more explanation, which is put in the appendix at the end of Section 5.

We can find a vector $u\in\mathfrak{m}$ such that
$u$ is sufficiently close to $u'$ and $\mathrm{Ad}(H)u$
is a principal orbit. Then $u$ is contained in a conic open dense subset $\mathcal{U}$ indicated by Lemma \ref{lemma-3}. The centralizer $\mathfrak{c}_{\mathfrak{g}}(u)$ must be isomorphic to
$\mathfrak{c}_\mathfrak{g}(u')$, because otherwise by the classification for flag manifolds, we have $\dim\mathfrak{c}(\mathfrak{c}_\mathfrak{g}(u))\geq4$ and
$\dim\mathfrak{c}_\mathfrak{g}(u)\leq 19$, which contradicts
(1) of Lemma \ref{lemma-6}.

As indicated by the table in Theorem 11.1 of \cite{Wo1968},
the $\mathrm{Ad}(H)$-action on $\mathfrak{m}$ is the tensor product between the natural $SO(3)$-action on $\mathbb{R}^3$ and the isotropy representation for the symmetric space $E_6/F_4$. So $\mathrm{Ad}(H)$-action on $\mathfrak{m}$ is faithful. On the other hand, it is not in Table B of \cite{HH1970}, i.e., any principal $\mathrm{Ad}(H)$-orbit in $\mathfrak{m}$ has the same dimension as $H$. So we have $\mathfrak{c}_\mathfrak{h}(u)=0$.

Now we consider the linear map $l(\cdot)=[u,\cdot]_\mathfrak{h}|_{\mathfrak{m}}$ from $\mathfrak{m}$ to $\mathfrak{h}$, which appears in the proof of Lemma \ref{lemma-6}. For any $w\in\mathfrak{m}$, we have
$\langle[u,w],\mathfrak{h}\rangle_{\mathrm{bi}}
=\langle w,[\mathfrak{h},u]\rangle_{\mathrm{bi}}$, so $\ker l$ is the $\langle\cdot,\cdot\rangle_{\mathrm{bi}}$-orthogonal
complement of $[\mathfrak{h},u]$ in $\mathfrak{m}$. Because $\mathfrak{c}_\mathfrak{h}(u)=0$, $\dim\ker l=\dim\mathfrak{m}-\dim\mathfrak{h}=23$.

Finally, (\ref{035}) can be applied to this $u\in\mathcal{U}$, which provides $\dim\mathfrak{c}_\mathfrak{g}(u)=23$. This contradicts the previous observation $\mathfrak{c}_\mathfrak{g}(u)=so(8)\oplus\mathbb{R}^3$.

{\bf Case 6}: $(\mathfrak{g},\mathfrak{h})=(E_7, su(3)\oplus su(6))$.

By (1) of Lemma \ref{lemma-6}, we have
\begin{equation}\label{015}
\dim\mathfrak{c}_{\mathfrak{g}}(u)\geq 47\quad\mbox{and}
\quad\dim \mathrm{Ad}(G)u\leq 86
\end{equation}
for any $u\in\mathfrak{m}$. By the classification for flag manifolds, if $\dim\mathfrak{c}(\mathfrak{c}_\mathfrak{g}(u))\geq 3$,
$\dim\mathfrak{c}_{\mathfrak{g}}(u)\leq 31$ with the equality achieved when $\mathrm{Ad}(G)u=E_6/Spin(8)T^3$. So we
have $\max_{v\in\mathfrak{m}} \dim\mathfrak{c}(\mathfrak{c}_\mathfrak{g}(v))\leq 2$, and
by (2) of Lemma \ref{lemma-6} and the second inequality in (\ref{015}), for a generic $u\in\mathfrak{m}$,
$$88\geq\dim\mathrm{Ad}(G)u+\max_{v\in\mathfrak{m}} \dim\mathfrak{c}(\mathfrak{c}_\mathfrak{g}(v))\geq \mathfrak{m}=90.$$
This is a contradiction.

{\bf Case 7}: $(\mathfrak{g},\mathfrak{h})=(E_8, su(3)\oplus E_6)$.

On one hand,
by (1) of Lemma \ref{lemma-6}, we have
$\dim\mathfrak{c}_\mathfrak{g}(u)\geq 76$ for each $u\in\mathfrak{m}$. By the classification of flag manifolds,
we must have
\begin{equation}\label{034}
\dim\mathfrak{c}(\mathfrak{c}_\mathfrak{g}(u))\leq 2,\quad\forall u\in\mathfrak{m},
\end{equation}
 otherwise
$\dim\mathfrak{c}_\mathfrak{g}(u)\leq 48$ with the equality achieved when $\mathrm{Ad}(G)u=E_8/Spin(10)T^3$.

On the other hand, there is a $\mathfrak{k}=E_7$ in $\mathfrak{g}$ which contains the $E_6$-summand $\mathfrak{h}_1$ in $\mathfrak{h}$ and intersects the $su(3)$-summand $\mathfrak{h}_2$ in $\mathfrak{h}$
at a line. The pair $(\mathfrak{k},
\mathfrak{k}\cap\mathfrak{h}))
=(E_7,E_6\oplus\mathbb{R})$ is a symmetric pair. Denote by $\mathfrak{m}'$ the $\langle\cdot,\cdot\rangle_{\mathrm{bi}}$-complement of $\mathfrak{k}\cap\mathfrak{h}$
in $\mathfrak{k}$, then $\mathfrak{k}=(\mathfrak{k}\cap\mathfrak{h})+\mathfrak{m}'$
is a Cartan decomposition. The rank of $E_7/E_6 U(1)$ is 3 \cite{He1962}, i.e., we have find a 3-dimensional commutative subspace $\mathfrak{t}'$ in $\mathfrak{m}'$, from which we can find a vector $u$ with $\dim\mathfrak{c}(\mathfrak{c}_\mathfrak{g}(u))\geq3$.
Since $\mathrm{rk}\mathfrak{g}=\mathrm{rk}\mathfrak{h}$ and
$\mathfrak{k}=E_7$ is regular in $\mathfrak{g}=E_8$
 (i.e., each root plane of $\mathfrak{k}$ is a root plane
of $ \mathfrak{g} $),
$\mathfrak{m}'$ and $\mathfrak{m}$ are both sums of root planes of $ \mathfrak{g} $, so we have $\mathfrak{m}'\subset\mathfrak{m}$. The previously mentioned
$u\in\mathfrak{m}'\subset\mathfrak{m}$ satisfies
with $\dim\mathfrak{c}(\mathfrak{c}_\mathfrak{g}(u))\geq3$.
This contradicts (\ref{034}).

{\bf Case 8}: $(\mathfrak{g},\mathfrak{h})=(so(12),sp(1)\oplus sp(3))$.

In this case, the root system $\Delta_\mathfrak{g}=\{\pm e_i\pm e_j,\forall 1\leq i<j\leq 6\}$, and the subspace $\mathfrak{t}\cap\mathfrak{h}$ is linearly spanned by
$e_1+\cdots+e_6$ from the $sp(1)$-summand and $e_1-e_2$, $e_3-e_4$, $e_5-e_6$ from the $sp(3)$-summand. So $\mathfrak{t}\cap\mathfrak{m}$ consists of all vectors of the form $ae_1+ae_2+be_3+be_4+ce_5+ce_6$, $\forall a,b,c\in\mathbb{R}$ with $a+b+c=0$.

For the vector $u=e_1+e_2+2e_3+2e_4-3e_5-3e_6\in
\mathfrak{t}\cap\mathfrak{m}$, the dimension of $\mathfrak{c}_\mathfrak{g}(u)=su(2)\oplus su(2)\oplus su(2)\oplus\mathbb{R}^3$ is $12$. This  contradicts (1) of Lemma \ref{lemma-6}.

{\bf Case 9}: $(\mathfrak{g},\mathfrak{h})=(so(16),sp(1)\oplus sp(4))$.

In this case,
the root system $\Delta_\mathfrak{g}=\{\pm e_i\pm e_j,\forall 1\leq i<j\leq 8\}$, and the subspace $\mathfrak{t}\cap\mathfrak{h}$ is linearly spanned by
$e_1+\cdots+e_8$ from the $sp(1)$-summand and $e_1-e_2$, $e_3-e_4$, $e_5-e_6$, $e_7-e_8$ from the $sp(4)$-summand. So $\mathfrak{t}\cap\mathfrak{m}$ consists all vectors of the form $ae_1+ae_2+be_3+be_4+ce_5+ce_6+de_7+de_8$,  $\forall a,b,c,d\in\mathbb{R}$ with $a+b+c+d=0$.

For the vector $u=e_1+e_2+2e_3+2e_4+3e_5+3e_6-6e_7-6e_8\in
\mathfrak{t}\cap\mathfrak{m}$, the dimension of $\mathfrak{c}_\mathfrak{g}(u)=su(2)\oplus su(2)\oplus su(2)\oplus su(2)\oplus \mathbb{R}^4$ is $16$. This  contradicts (1) of Lemma \ref{lemma-6}.

This ends the proof of Theorem \ref{main-thm-4}.
\end{proof}
\bigskip

\noindent
{\bf Appendix: some discussion for the algebraic structure of $(\mathfrak{g},\mathfrak{h})=(E_7,su(2)\oplus F_4)$}


For simplicity, we denote $\mathfrak{h}_1$ and $\mathfrak{h}_2$ the $su(2)$- and $F_4$-summands in
$\mathfrak{h}$ respectively.

Notice that $\mathfrak{t}\cap\mathfrak{h}_1$ is a line, which
centralizer in $\mathfrak{g}=E_7$ has a semi simple summand
$\mathfrak{g}'=E_6$. The subalgebra $\mathfrak{h}_2=F_4$ is contained in $\mathfrak{g}'=E_6$ such that $E_6/F_4$ is a symmetric space. We can expand a Cartan subalgebra of $\mathfrak{h}_2=F_4$ first to $\mathfrak{g}'$ and then to $\mathfrak{g}$, which provides a Cartan subalgebra $\mathfrak{t}$ such that $\mathfrak{t}\cap\mathfrak{h}_1$,
$\mathfrak{t}\cap\mathfrak{h}_2$, $\mathfrak{t}\cap\mathfrak{g}'$ are Cartan subalgebras for
$\mathfrak{h}_1$, $\mathfrak{h}_2$ and $\mathfrak{g}'$ respectively. The roots and root planes, with respect to these
specified Cartan subalgebras can be arranged as following.
Using $\langle\cdot,\cdot\rangle_{\mathrm{bi}}$, roots are viewed as vectors in $\mathfrak{t}$ rather than $\mathfrak{t}^*$.

The root system $\Delta_{\mathfrak{g}}$ of $\mathfrak{g}$ consists of the following roots:
\begin{eqnarray*}
& &\pm e_i\pm e_j,\forall 1\leq i<j\leq 6; \pm\sqrt{2}e_7;\\
& &\pm\tfrac{1}{2}e_1\pm\cdots
\pm\tfrac{1}{2}e_6\pm\tfrac{\sqrt{2}}{2}e_7
\mbox{ with even }+\tfrac12 \mbox{-coefficients}.
\end{eqnarray*}
It has a prime root system
\begin{eqnarray*}
& &\alpha_1=e_1-e_2,\ \alpha_2=e_2-e_3,\ \alpha_3=e_3-e_4,
\ \alpha_4=e_4-e_5,\\
& &\alpha_5=e_5-e_6,\ \alpha_6=e_5+e_6,\ \alpha_7=-\tfrac12(e_1+\cdots+e_6+\sqrt{2}e_7).
\end{eqnarray*}
The subset $\{\alpha_2,\cdots,\alpha_7\}$ is the prime root system of $\mathfrak{g}'=E_6$.

The subalgebra $\mathfrak{h}_2=F_4$ is the fixed point set for
the involutive automorphism $\sigma$ of $\mathfrak{g}'$
which maps each $\alpha_i$ to $\alpha_{9-i}$ for $i=2,3,6,7$, and fixes $\alpha_4$ and $\alpha_5$. So $\mathfrak{t}\cap\mathfrak{h}_2$ is spanned by $\alpha_4$, $\alpha_5$, $\alpha_3+\alpha_6=e_3-e_4+e_5+e_6$ and
$\alpha_2+\alpha_7=\tfrac12(-e_1+e_2-3e_3-e_4-e_5-e_6-\sqrt{2}e_7)$.

The subspace $\mathfrak{t}\cap\mathfrak{h}_1$ in $\mathfrak{h}_1=su(2)$ commutes with each root plane of $\mathfrak{h}_2=F_4$, so it commutes with each root plane of $\mathfrak{g}'=E_6$. Then we see that $\mathfrak{t}\cap\mathfrak{h}_1$ is
$\langle\cdot,\cdot\rangle_{\mathrm{bi}}$-orthogonal to
$\mathfrak{t}\cap\mathfrak{g}'$, i.e., it is spanned by $2e_1-\sqrt{2}e_7$.

Above description is enough for us to calculate  $\mathfrak{t}\cap\mathfrak{m}$, which is linearly spanned by $e_3-e_4-e_5-e_6$ and $e_1+3e_2+\sqrt{2}e_7$.

Let $u'$ be a generic vector in $\mathfrak{t}\cap\mathfrak{m}$. For example, we can choose $u'=7(e_1+3e_2+\sqrt{2}e_7)+5(e_3-e_4-e_5-e_6)$. The
centralizer $\mathfrak{c}_{\mathfrak{g}}(u')$ has the following roots,
\begin{eqnarray*}
& &\pm(e_3+e_4),\ \pm(e_3+e_5),\ \pm(e_3+e_6),\ \pm(e_4-e_5),\ \pm(e_4-e_6), \ \pm(e_5-e_6),\nonumber\\
& &\pm\tfrac12(e_1-e_2+\sqrt{2}e_7)\pm
\tfrac12(e_3-e_4+e_5+e_6),\nonumber\\
& &\pm\tfrac12(e_1-e_2+\sqrt{2}e_7)
\pm\tfrac12(e_3+e_4-e_5+e_6),\nonumber\\
& &\pm\tfrac12(e_1-e_2+\sqrt{2}e_7)
\pm\tfrac12(e_3+e_4+e_5-e_6),
\end{eqnarray*}
which provides a root system of $so(8)$. So $\mathfrak{c}_{\mathfrak{g}}(u')=so(8)\oplus\mathbb{R}^3$.

\begin{remark} Another description for $(E_7,su(2)\oplus F_4)$ can be found in Table 35 of \cite{Dy1952}.
\end{remark}

\section{Homogeneous manifold on which all invariant metrics are Berwald}

In this section, we  prove Theorem C, which classifies homogeneous manifold $G/H$ with a compact semi simple $G$, on which all $G$-invariant metrics on Berwald. It is an immediate corollary of Theorem B and the following.

\begin{theorem} \label{thm-1}
Let $G/H$ be a homogeneous manifold with a compact $(G,\langle\cdot,\cdot\rangle_{\mathrm{bi}})$.
Then $G/H$ is a Finsler equigeodesic space if and only if each $G$-invariant Finsler metric on $G/H$ is Berwald.
\end{theorem}

\begin{proof}Firstly, we assume $G/H$ is Finsler equigeodesic and prove each $G$-invariant Finsler metric $F$ on $G/H$ is Berwald.
By Lemma \ref{proposition-1}, $F$ is naturally reductive
with respect to the orthogonal reductive decomposition $\mathfrak{g}=\mathfrak{h}+\mathfrak{m}$.
It has a vanishing spray vector field, so By Lemma \ref{lemma-9}, it is Berwald. This proves one side of theorem \ref{thm-1}.

Nextly, we assume each $G$-invariant Finsler metric on $G/H$ is Berwald and prove $G/H$ is Finsler equigeodesic. Denote $S=\{u| u\in\mathfrak{m}, |u|_{\mathrm{bi}}=\langle u,u\rangle^{1/2}_{\mathrm{bi}}=1 \}$ the unit sphere
in $\mathfrak{m}$. By Lemma \ref{lemma-8}, we just need to prove that each $u\in S$ is a Finsler equigeodesic vector.
We may assume $\mathrm{Ad}(H)u\neq S$, otherwise $G/H$ is a compact rank-one Riemannian symmetric space \cite{Wa1952}, which is obviously Finsler equigeodesic.

Let $F$ be any $G$-invariant Finsler metric on $G/H$. We use the same $F=F(o,\cdot)$ to denote the $\mathrm{Ad}(H)$-invariant Minkowski norm on $\mathfrak{m}$. We can find two sufficiently small $\mathrm{Ad}(H)$-invariant open neighborhood $\mathcal{U}_1$ and $\mathcal{U}_2$ of the orbit $\mathrm{Ad}(H)u$ in $S$ with $\mathcal{U}_1\subset\mathcal{U}_2$, and an $\mathrm{Ad}(H)$-invariant smooth cut-off function $f:S\rightarrow[0,1]$ satisfying $f(\mathcal{U}_1)=1$ and $f(S\backslash\mathcal{U}_2)=0$. Notice that $S\backslash\mathcal{U}_2$ contains a nonempty open subset of $S$.
%
For $t\in\mathbb{R}$ which is sufficiently close to $0$, $$F_t(y)=\sqrt{\langle y,y\rangle_{\mathrm{bi}}+t f(\tfrac{y}{|y|_{\mathrm{bi}}}) F(y)^2}$$
defines a smooth family of $\mathrm{Ad}(H)$-invariant Minkowski norms on $\mathfrak{m}$. We use the same $F_t$ to denote the corresponding $G$-invariant Finsler metrircs  on $G/H$.

Denote
$\langle\cdot,\cdot\rangle_y$ and $\langle\cdot,\cdot\rangle_y^{F_t}$ the fundamental tensors of the Minkowski norms $F$ and $F_t$
respectively. Let $\eta_t:\mathfrak{m}\backslash\{0\}\rightarrow\mathfrak{m}$ be
the spray vector field of $F_t$ and $\omega=\tfrac{d}{dt}|_{t=0}\eta_t$. Then we have the following observations.
Notice that $F_0=|\cdot|_{\mathrm{bi}}$, so $\langle\cdot,\cdot\rangle_{y}^{F_0}=\langle\cdot,\cdot\rangle_{\mathrm{bi}}$ and $\eta_0=0$.
For $y\in\mathfrak{m}\backslash\mathbb{R}_{\geq0}\mathcal{U}_2$, $\langle\cdot,\cdot\rangle^{F_t}_y=\langle\cdot,\cdot\rangle_{\mathrm{bi}}$, and for
$y\in\mathbb{R}_{>0}\mathcal{U}_1$, $\tfrac{d}{dt}\langle\cdot,\cdot\rangle_y^{F_t}=\langle\cdot,\cdot\rangle_y$. Since each $F_t$ is Berwald, By Lemma \ref{lemma-9},
each $\eta_t$ is quadratic, and then $\omega$ is also quadratic.

The definition of spray vector field provides
\begin{equation}\label{052}
\langle\eta_t(y),w\rangle^{F_t}_y=\langle y,[w,y]_\mathfrak{m}\rangle^{F_t}_y,\quad\forall y\in\mathfrak{m}\backslash\{0\},w\in\mathfrak{m}.
\end{equation}
By the observations in the previous paragraph, the derivative of (\ref{052}) for the $t$-variable
which is evaluated at $t=0$ can be presented as
\begin{equation}\label{053}
\langle \omega(y),w\rangle_{\mathrm{bi}}=\tfrac{d}{dt}|_{t=0}\langle y,[w,y]_\mathfrak{m}\rangle^{F_t}_y,\quad\forall
y\in\mathfrak{m}\backslash\{0\},w\in\mathfrak{m}.
\end{equation}
The left side of (\ref{053}) is quadratic for $y$, and the right side vanishes for $y\in\mathfrak{m}\backslash\mathbb{R}_{\geq0}\mathcal{U}_2$.
So both sides of (\ref{053}) vanish for all $y\in\mathfrak{m}\backslash\{0\}$.
In particular, for $y=u\in\mathbb{R}_{>0}\mathcal{U}_1$, we have
$$\tfrac{d}{dt}|_{t=0}\langle u,[w,u]_\mathfrak{m}\rangle_u=\langle u,[w,u]_\mathfrak{m}\rangle_u=0,\quad\forall u\in\mathfrak{m}.$$

To summarize, $u\in S\subset\mathfrak{m}\backslash\{0\}$ is a geodesic vector for any $G$-invariant Finsler metric $F$, so $u$ is a Finsler equigeodesic vector. Since $u$ is arbitrary, $G/H$ is Finsler equigeodesic.
This proves the other side of Theorem \ref{thm-1}.
\end{proof}

\noindent{\bf Acknowledge}\quad
This paper is supported by National Natural Science Foundation of China (12131012, 12001007, 11821101),
Beijing Natural Science Foundation (1222003),
Natural Science Foundation of Anhui province (1908085QA03). The authors sincerely thank Valerii Beres- tovskii,  Huibin Chen, Zhiqi Chen, Shaoqiang Deng, Yurii Nikonorov, Zaili Yan, Fuhai Zhu and Wolfgang Ziller for helpful discussions.

\end{document}